\documentclass[11pt]{article}

\usepackage{bm}%
\usepackage{graphicx}%
\usepackage{multirow}%
\usepackage{amsmath,amssymb,amsfonts}%
\usepackage{amsthm}%
\usepackage{mathrsfs}%
\usepackage[title]{appendix}%
\usepackage{xcolor}%
\usepackage{textcomp}%
\usepackage{manyfoot}%
\usepackage{booktabs}%
\usepackage{algorithm}%
\usepackage{algorithmicx}%
\usepackage{algpseudocode}%
\usepackage{listings}%

\newtheorem{theorem}{Theorem}[section]
\newtheorem{corollary}[theorem]{Corollary}
\newtheorem{lemma}[theorem]{Lemma}
\newtheorem{example}[theorem]{Example}
\newtheorem{proposition}[theorem]{Proposition}
\newtheorem{remark}[theorem]{Remark}
\newtheorem{definition}[theorem]{Definition}

\newcommand{\R}{\mathbb{R}}

\newcommand{\bn}{\mbox{\boldmath $n$}}
\newcommand{\bt}{\mbox{\boldmath $t$}}
\newcommand{\bs}{\mbox{\boldmath $s$}}
\newcommand{\ba}{\mbox{\boldmath $a$}}
\newcommand{\bb}{\mbox{\boldmath $b$}}
\newcommand{\bc}{\mbox{\boldmath $c$}}
\newcommand{\be}{\mbox{\boldmath $e$}}

\newcommand{\bx}{\mbox{\boldmath $x$}}
\newcommand{\by}{\mbox{\boldmath $y$}}

\newcommand{\bgamma}{\mbox{\boldmath $\gamma$}}
\newcommand{\bnu}{\mbox{\boldmath $\nu$}}
\newcommand{\bmu}{\mbox{\boldmath $\mu$}}
 
\begin{document}
 
\title{Hyperbolic generalized framed surfaces in hyperbolic 3-space}
\author{Donghe Pei, Masatomo Takahashi, Anjie Zhou}
\date{}
\maketitle

\abstract{Generalizing both hyperbolic framed surfaces and one-parameter families of hyperbolic framed curves, we introduce  the concept of hyperbolic generalized framed surfaces and establish their relations  in hyperbolic 3-space. We provide the necessary and sufficient conditions for a smooth surface to be a hyperbolic generalized framed base surface, followed by an analysis of the singularities  of hyperbolic generalized framed base surfaces. Additionally,   relations between hyperbolic generalized framed surfaces, generalized framed surfaces and lightcone framed surfaces are explored. As an application of hyperbolic generalized framed surfaces, we investigate the properties of horocyclic surfaces.
}
\renewcommand{\thefootnote}{\fnsymbol{footnote}}
\footnote[0]{2020 Mathematics Subject classification: 53A05, 57R45, 53B30,  58K05}
\footnote[0]{key words and phrases: hyperbolic generalized surface, horocyclic surface, singularity, hyperbolic 3-space}
\section{Introduction}
In differential geometry, singular surfaces have long been a central focus of research, and an increasing number of powerful tools have been developed to analyze them. In Euclidean space,   a widely used  tool is the framed surface introduced by Fukunaga and the second author   (cf. \cite{FT2019}). 
The framed surface  possesses  a smooth moving frame  at its singularities, which makes it possible to study the properties at these  points. Numerous articles have applied the theory of framed surfaces to various classes of surfaces, yielding many results (cf. \cite{DIT2023,EYA2022,FT2022,Y2022,YAT2021}). 
However, there are some surfaces that are  not framed surfaces, a typical example is the one-parameter family of framed curves,   and the relation between one-parameter families of framed curves and framed surfaces was discussed in \cite{FT2020}.  
To treat a broader class of singular surfaces, the second author and Yu introduced generalized framed surfaces in \cite{TY2024}. Generalized framed surfaces  include   framed surfaces  and also serve  as a generalization of the one-parameter families of framed curves. More singular surfaces, such as surfaces with corank one singularities, can be studied through it.

For the study of singular surfaces in different ambient spaces, we  introduced the lightcone framed surface, equipped with a lightcone frame, to study mixed-type surfaces with singularities in Lorentz–Minkowski 3-space  (cf. \cite{LPT2025}). The lightcone framed surface is closely related to   generalized framed surface in Euclidean 3-space, see Theorem 3.5 in \cite{LPT2025}. As a model of hyperbolic geometry, hyperbolic space has also been extensively studied (cf. \cite{I2009,IST2010,IT2008,ST2020,YC2025,YCW2024,ZYP2024,ZP}). In hyperbolic 3-space, hyperbolic framed curves have been used to investigate  various geometric objects (cf. \cite{YC2025,ZYP2024}).
The first and third authors  have   defined the hyperbolic framed surface in \cite{ZP}.  A curve in hyperbolic 3-space with curvature $ 1 $ and torsion $ 0 $ is known as a \textit{horocycle}. Treating the horocycle as a   ``straight line" yields horospherical geometry (cf. \cite{I2009}). A one-parameter family of horocycles is called a \textit{horocyclic surface}, which may not satisfy the definition of   hyperbolic framed base surface, as shown in  Proposition \ref{th4.2}. Similarly, a one-parameter family of hyperbolic framed curves is not necessarily a hyperbolic framed surface. These and other examples motivate the development of a more general notion of singular surfaces in hyperbolic 3-space.

This paper aims to  introduce the concept of hyperbolic generalized framed surfaces in hyperbolic 3-space, extending hyperbolic framed surfaces and one-parameter families of hyperbolic framed curves.
In Section \ref{S2}, we review
the theories of hyperbolic framed surfaces, one-parameter families of hyperbolic framed curves in hyperbolic 3-space, generalized framed surfaces in  Euclidean 3-space and lightcone framed surfaces in Lorentz-Minkowski 3-space.
We define hyperbolic generalized framed surfaces and  study their properties in Section \ref{S3}. We establish the relations  between hyperbolic generalized framed surfaces, hyperbolic framed surfaces and one-parameter families of hyperbolic framed curves (Propositions \ref{prop5} and \ref{prop6}). The relations between their base surfaces are also shown by considering the rotation of   frame (Propositions \ref{th11} and \ref{prop13}). Additionally, we provide  
necessary and sufficient conditions for a smooth surface to be a hyperbolic generalized framed base surface (Theorem \ref{th7}). The singularities of such base surfaces are analyzed using their basic invariants  and several examples are presented.
In Section \ref{S4}, we discuss the connections between hyperbolic generalized framed surfaces, generalized framed surfaces of Euclidean 3-space and lightcone framed surfaces of Lorentz-Minkowski 3-space.  As an application, Section \ref{S5} shows that horocyclic surfaces are hyperbolic generalized framed base surfaces and investigates their properties.

All maps and manifolds considered here are of class ${C^\infty}$ unless otherwise stated.
\paragraph{Acknowledgements} The first and third authors are partially supported by National
Natural Science Foundation of China (Grant No. 11671070). The second author
is partially supported by JSPS KAKENHI (Grant No. JP 24K06728). The third author is  grateful for the financial support provided by the CSC-MuroranIT Scholarship (Grant No. 202506620020).

\section{Preliminaries}\label{S2}
We quickly review the theories of  hyperbolic framed surfaces, one-parameter families of hyperbolic framed curves, generalized framed surfaces   and lightcone framed surfaces.

Let $\R_1^n $ denote   \textit{Lorentz-Minkowski n-space}   with the pseudo inner product $ \langle \bx_1,\bx_2\rangle=-x^1_{1}x^2_{1}+ \sum^{n}_{j=2}x^1_{j}x^2_{j}    $ and the  pseudo vector product $$\bx_1 \wedge \bx_2\wedge\dots\wedge\bx_{n-1}=\det\begin{pmatrix}-\be_1&\be_2&\dots&\be_n\\
	x^1_{1}&x^1_{2}&\dots   &x^1_{n}\\
	x^2_{1}&x^2_{2}&\dots  &x^2_{n}\\
	\vdots&\vdots& \ddots & \vdots \\
		x^{n-1}_{1}&x^{n-1}_{2}&\dots   &x^{n-1}_{n}
	\end{pmatrix},$$ 
   where $\bx_i =(x^i_{1},x^i_{2},\dots,x^i_{n})\in\R_1^n$  ($i=1,2,\dots,n-1$) and $\left\{\be_1,\be_2,\dots,\be_n\right\}$ is the canonical basis of $\R_1^n$. For any $\bx\in\R_1^n$, one easily checks that
$  \langle \bx,\bx_1 \wedge \bx_2 \wedge\dots\wedge\bx_{n-1}\rangle=\det(\bx,\bx_1,\bx_2,\dots,\bx_{n-1}), $
so that $\bx_1 \wedge \bx_2 \wedge\dots\wedge\bx_{n-1}$ is pseudo-orthogonal to any $\bx_i$.    A vector $\bx\in\R_1^n\backslash \left\{\bm{0}\right\} $ is $\it{spacelike}$, $\it{lightlike}$ or $\it{timelike}$ if $\langle \bx,\bx\rangle\textgreater0$, $\langle \bx,\bx\rangle=0$ or $\langle \bx,\bx\rangle\textless0$, respectively.
The norm of vector $\bx$ is given by 
$\Vert \bx\Vert =\sqrt{\vert \langle \bx,\bx\rangle \vert}. $

We define the \textit{hyperbolic 3-space} as
$$H^3=\left\{ \bx\in\R_1^4 \:|\: \langle \bx,\bx\rangle=-1\right\} $$ and the \textit{de Sitter 3-space} as 
$$S_1^3=\left\{ \bx\in\R_1^4 \:|\: \langle \bx,\bx\rangle=1\right\}.$$
The submanifold    $\Delta_5$ in \cite{ CI2009} is defined as   $   \Delta_5=\left\{(\bnu_1,\bnu_2)\in S_1^3\times S_1^3\; |\;\langle \bnu_1,\bnu_2 \rangle =0 \right\} .$  

It is well known that there are two connected branches in   hyperbolic 3-space.  Here we only consider  curves or surfaces on  the  branch $ \left\{ \bx=(x_1,x_2,x_3,x_4)\in H^3 \:|\:   x_1>0\right\}  $  and remain denote it by $H^3$.

In this paper, we denote $U$   a simply connected open subset in $\R^2$ and
$ I $   an interval in $\R$.
\subsection{Hyperbolic framed surfaces}
\begin{definition}[\cite{ZP}] 
{\rm 
We define  $(\bx,\bn,\bs): U \rightarrow H^3 \times \Delta_5$ as a  {\it hyperbolic  framed surface}  if  $ \langle \bx_u,\bn\rangle(u,v) =\langle\bx_v ,\bn  \rangle(u,v)= \langle\bx  ,\bn  \rangle(u,v)=\langle\bx ,\bs \rangle(u,v)=0$  for all $(u,v)\in U$, where $\bx_u(u,v)=(\partial\bx/\partial u)(u,v)$, $\bx_v(u,v)=(\partial\bx/\partial v)(u,v)$.   
  $\bx :  U \rightarrow H^3$ is  a $\mathit{hyperbolic}$ $\mathit{framed}$ $\mathit{base}$ $\mathit{surface}$ if there exists   $(\bn,\bs) : U \rightarrow \Delta_5$ such that  $(\bx,\bn,\bs): U \rightarrow H^3 \times \Delta_5$  is a hyperbolic framed surface.
}
\end{definition} 
Let $\bt(u,v) = (\bx \wedge \bn \wedge\bs)(u,v) .$ Then    $\left\{ \bx , \bn , \bs , \bt \right\} $ is a moving frame along $\bx $. Thus, we get 
\begin{equation}\notag
 \begin{aligned}
\begin{pmatrix}
\bx_u(u,v)\\
\bn_u(u,v)\\
\bs_u(u,v)\\
\bt_u(u,v)
\end{pmatrix}
&=
\begin{pmatrix}
0&0&a_1(u,v)&b_1(u,v)\\
0&0&e_1(u,v)&f_1(u,v)\\
a_1(u,v)&-e_1(u,v)&0&g_1(u,v)\\
b_1(u,v)&-f_1(u,v)&-g_1(u,v)&0
\end{pmatrix}
\begin{pmatrix}
\bx(u,v)\\
\bn(u,v)\\
\bs(u,v)\\
\bt(u,v)
\end{pmatrix},
 \\
 \begin{pmatrix}
\bx_v(u,v)\\
\bn_v(u,v)\\
\bs_v(u,v)\\
\bt_v(u,v)
\end{pmatrix}
&=
\begin{pmatrix}
0&0&a_2(u,v)&b_2(u,v)\\
0&0&e_2(u,v)&f_2(u,v)\\
a_2(u,v)&-e_2(u,v)&0&g_2(u,v)\\
b_2(u,v)&-f_2(u,v)&-g_2(u,v)&0
\end{pmatrix}
\begin{pmatrix}
\bx(u,v)\\
\bn(u,v)\\
\bs(u,v)\\
\bt(u,v)
\end{pmatrix},
  \end{aligned}
\end{equation}
where  $a_i, b_i, e_i, f_i, g_i: U \rightarrow \R$, $i=1, 2$  are  smooth functions. They are called the \textit{basic invariants} of $(\bx, \bm {n}, \bs).$ 
\subsection{One-parameter families of hyperbolic   framed curves}
\begin{definition} 
{\rm We call  $(\bgamma ,\bnu_1,\bnu_2) : U\rightarrow H^3 \times \Delta_5$  a {\it one-parameter family of hyperbolic  framed curves with  respect to $ u $} (respectively, {\it with respect to $ v $}) if $\langle \bgamma ,\bnu_i\rangle(u,v)=\langle \bgamma_u ,\bnu_i \rangle(u,v) =0$ (respectively, $\langle \bgamma ,\bnu_i\rangle(u,v)=\langle \bgamma_v ,\bnu_i \rangle(u,v) =0$) for all $(u,v)\in U$, $i=1,2$, where $\bgamma_u(u,v)=(\partial\bgamma/\partial u)(u,v)$, $\bgamma_v(u,v)=(\partial\bgamma/\partial v)(u,v)$.  $\bgamma  : U \rightarrow H^3 $ is called a {\it one-parameter family of hyperbolic  framed base curves with  respect to $ u $} (respectively, {\it with respect to $ v $}) if there exists  $(\bnu_1,\bnu_2) : U\rightarrow \Delta_5$ such that $(\bgamma,\bnu_1,\bnu_2) : U \rightarrow H^3 \times \Delta_5$  is a one-parameter family of hyperbolic  framed curves with  respect to $ u $ (respectively,   with respect to $ v $).
}
\end{definition}
Define $\bmu(u,v) =(\bgamma\wedge\bnu_1  \wedge\bnu_2)(u,v) $ ,    then $\left\{\bgamma  ,\bnu_1 ,\bnu_2,\bmu \right\}$ is a moving frame along $\bgamma $.
 
Let $(\bgamma ,\bnu_1,\bnu_2)$ be a one-parameter family of hyperbolic framed curves with respect to $u$. We have the following formulas
\begin{flalign} 
\begin{pmatrix}
\bgamma_u(u,v)  \\
\bnu_{1u}(u,v)  \\
\bnu_{2u} (u,v)  \\
\bmu_u(u,v)  
\end{pmatrix}
&=
\begin{pmatrix}
0 & 0 & 0 & m_1(u,v) \\
0 & 0 & n_1(u,v)  & a_1(u,v)  \\
0 & -n_1(u,v)  & 0 & b_1(u,v)  \\
m_1(u,v)  & -a_1(u,v)  & -b_1(u,v)  & 0 \\
\end{pmatrix}
\begin{pmatrix}
\bgamma(u,v)   \\
\bnu_1(u,v)  \\
\bnu_2(u,v)  \\
\bmu(u,v) 
\end{pmatrix},\notag\\ 
\begin{pmatrix}
\bgamma_v(u,v) \\
\bnu_{1v}(u,v)   \\
\bnu_{2v}(u,v)   \\
\bmu_v(u,v) 
\end{pmatrix}
&=
\begin{pmatrix}
0 & P_1(u,v) & Q_1(u,v) &M_1(u,v)\\
P_1(u,v) & 0 & N_1(u,v)  & A_1(u,v)  \\
Q_1(u,v) & -N_1(u,v) & 0 & B_1(u,v)  \\
M_1(u,v)  & -A_1(u,v) & -B_1(u,v)  & 0 \\
\end{pmatrix}
\begin{pmatrix}
\bgamma(u,v)  \\
\bnu_1(u,v)  \\
\bnu_2(u,v)  \\
\bmu (u,v)
\end{pmatrix},\notag
\end{flalign}
where  $m_1, n_1, a_1, b_1, M_1,N_1,A_1,B_1,P_1,Q_1 : U \rightarrow \R$  are  smooth functions. They are called the  {\it  curvature} of the one-parameter family of hyperbolic framed
curves with respect to $ u $.

If $(\bgamma ,\bnu_1,\bnu_2)$ is a one-parameter family of hyperbolic framed curves with respect to $v$, we have  
\begin{flalign} 
	\begin{pmatrix}
		\bgamma_u(u,v)  \\
		\bnu_{1u}(u,v)  \\
		\bnu_{2u} (u,v)  \\
		\bmu_u(u,v)  
	\end{pmatrix}
	&=
	\begin{pmatrix}
		0 & P_2(u,v) & Q_2(u,v) & M_2(u,v) \\
		P_2(u,v) & 0 & N_2(u,v)  & A_2(u,v)  \\
		Q_2(u,v) & -N_2(u,v)  & 0 & B_2(u,v)  \\
		M_2(u,v)  & -A_2(u,v)  & -B_2(u,v)  & 0 \\
	\end{pmatrix}
	\begin{pmatrix}
		\bgamma(u,v)   \\
		\bnu_1(u,v)  \\
		\bnu_2(u,v)  \\
		\bmu(u,v) 
	\end{pmatrix},\notag\\ 
	\begin{pmatrix}
		\bgamma_v(u,v) \\
		\bnu_{1v}(u,v)   \\
		\bnu_{2v}(u,v)   \\
		\bmu_v(u,v) 
	\end{pmatrix}
	&=
	\begin{pmatrix}
		0 & 0 & 0 &m_2(u,v)\\
		0 & 0 & n_2(u,v)  & a_2(u,v)  \\
		0 & -n_2(u,v) & 0 & b_2(u,v)  \\
		m_2(u,v)  & -a_2(u,v) & -b_2(u,v)  & 0 \\
	\end{pmatrix}
	\begin{pmatrix}
		\bgamma(u,v)  \\
		\bnu_1(u,v)  \\
		\bnu_2(u,v)  \\
		\bmu (u,v)
	\end{pmatrix},\notag
\end{flalign}
where  smooth functions  $m_2, n_2, a_2, b_2, M_2,N_2,A_2,B_2,P_2,Q_2 : U \rightarrow \R$  are   called the  {\it  curvature} of the one-parameter family of hyperbolic framed
curves with respect to $ v $.
\subsection{Generalized framed surfaces in Euclidean 3-space}
Let $\R^3$ denote   Euclidean  3-space with the inner product $\bx\cdot\by=x_1y_1+x_2y_2+x_3y_3$   and the vector product $$\bx\times\by=\det\begin{pmatrix}  \be_1&\be_2&\be_3\\
	x_1&x_2&x_3\\
	y_1&y_2&y_3
\end{pmatrix},$$  where $\bx=(x_1,x_2,x_3) ,  \by=(y_1,y_2,y_3)\in \R^3$ and $
\left\{\be_1,\be_2,\be_3\right\}$ is the canonical basis of $\R^3$.  Denote  $\Delta= 
\left\{(\bx,\by)\in S^2\times S^2\; | \; \bx\cdot\by   =0 \right\} $, where $S^2=\left\{ \bx\in\R^3 \:|\: \bx\cdot\bx =1\right\} $.
\begin{definition}[\cite{TY2024}]
\rm
We call  $(\overline{\bx},\overline{\bnu}_1,\overline{\bnu}_2 ): U \rightarrow \R^3 \times \Delta$  a   {\it generalized framed surface} if  there exist smooth functions $\overline{\alpha},\overline{\beta}:U\rightarrow\R$ such that $ (\overline{\bx}_u\times\overline{\bx}_v)(u,v)=\overline{\alpha}(u,v)\overline{\bnu}_1(u,v)+\overline{\beta} (u,v)\overline{\bnu}_2(u,v)$ for all $(u,v)\in U$. $\overline{\bx}:U\rightarrow \R^3$ 
is a  {\it generalized framed base surface}  if there exists $(\overline{\bnu}_1,\overline{\bnu}_2):U\rightarrow\Delta$ such that $(\overline{\bx},\overline{\bnu}_1,\overline{\bnu}_2)$ is a  generalized framed surface.
\end{definition}
Let $\overline{\bnu}_3(u,v)=(\overline{\bnu}_1 \times\overline{\bnu}_2)(u,v)$,  then $\left\{  \overline{\bnu}_1, \overline{\bnu}_2, \overline{\bnu}_3\right\} $ is a moving frame along $\overline{\bx} $.

\subsection{Lightcone framed surfaces in Lorentz-Minkowski 3-space}
The submanifold $\Delta_4$   in \cite{ CI2009} is defined as $  \Delta_4=\left\{(\bnu_1,\bnu_2)\in LC^{*}\times LC^{*}\; |\;\langle \bnu_1,\bnu_2 \rangle =-2 \right\}  $, where $LC^{*}=\left\{ \bx\in\R_1^3\backslash \left\{\bm{0}\right\} \:|\: \langle \bx,\bx\rangle=0\right\}$. 
\begin{definition}[\cite{LPT2025}]
\rm
We call  $(\widetilde{\bx}, \bm{\ell}^+,\bm{\ell}^- ): U \rightarrow \R_1^3 \times \Delta_4$  a   {\it lightcone framed surface} if  there exist smooth functions $\widetilde{\alpha},\widetilde{\beta}:U\rightarrow\R$ such that $ (\widetilde{\bx}_u\wedge\widetilde{\bx}_v)(u,v)=\widetilde{\alpha}(u,v)\bm{\ell}^+(u,v)+\widetilde{\beta} (u,v)\bm{\ell}^-(u,v)$ for all $(u,v)\in U$. $\widetilde{\bx}:U\rightarrow \R_1^3$ 
is a  {\it lightcone framed base surface}  if there exists $(\bm{\ell}^+,\bm{\ell}^-):U\rightarrow\Delta_4$ such that $(\widetilde{\bx}, \bm{\ell}^+,\bm{\ell}^- )$ is a  lightcone framed surface.
\end{definition} 
\section{Hyperbolic generalized framed surfaces}\label{S3}
We give a definition of a hyperbolic generalized framed surface and investigate the properties of it.
\begin{definition}
\rm We say that $(\bx,\bnu_1,\bnu_2): U \rightarrow H^3 \times \Delta_5$ is a  {\it hyperbolic generalized framed surface}  if $\langle \bx ,\bnu_1\rangle(u,v)=\langle \bx ,\bnu_2\rangle(u,v)=0$ and there exist smooth functions $\alpha,\beta:U\rightarrow\R$  such that $(\bx\wedge\bx_u\wedge\bx_v)(u,v)=\alpha(u,v)\bnu_1(u,v)+\beta(u,v)\bnu_2(u,v)$ for all $(u,v)\in U$.     
We define $\bx :  U \rightarrow H^3$ as  a {\it hyperbolic generalized framed base surface}  if there exists   $(\bnu_1,\bnu_2) : U \rightarrow \Delta_5$ such that  $(\bx,\bnu_1,\bnu_2): U \rightarrow H^3 \times \Delta_5$  is a hyperbolic generalized framed surface.
\end{definition}
Let $\bnu_3(u,v) = (\bx \wedge \bnu_1 \wedge\bnu_2)(u,v) .$ Then    $\left\{ \bx , \bnu_1 , \bnu_2 , \bnu_3 \right\} $ is a moving frame along $\bx $. Thus, we get 
\begin{equation}\label{eq1} 
{
\begin{pmatrix}
\bx_u(u,v)\\
\bnu_{1u}(u,v)\\
\bnu_{2u}(u,v)\\
\bnu_{3u}(u,v)
\end{pmatrix}
=
\begin{pmatrix}
0&a_1(u,v)&b_1(u,v)&c_1(u,v)\\
a_1(u,v)&0&e_1(u,v)&f_1(u,v)\\
b_1(u,v)&-e_1(u,v)&0&g_1(u,v)\\
c_1(u,v)&-f_1(u,v)&-g_1(u,v)&0
\end{pmatrix}
\begin{pmatrix}
\bx(u,v)\\
\bnu_1(u,v)\\
\bnu_2(u,v)\\
\bnu_3(u,v)
\end{pmatrix},
}
\end{equation}

\begin{equation} \label{eq2} 
{\begin{pmatrix}
\bx_v(u,v)\\
\bnu_{1v}(u,v)\\
\bnu_{2v}(u,v)\\
\bnu_{3v}(u,v)
\end{pmatrix}
=
\begin{pmatrix}
0&a_2(u,v)&b_2(u,v)&c_2(u,v)\\
a_2(u,v)&0&e_2(u,v)&f_2(u,v)\\
b_2(u,v)&-e_2(u,v)&0&g_2(u,v)\\
c_2(u,v)&-f_2(u,v)&-g_2(u,v)&0
\end{pmatrix}
\begin{pmatrix}
\bx(u,v)\\
\bnu_1(u,v)\\
\bnu_2(u,v)\\
\bnu_3(u,v)
\end{pmatrix},
}
\end{equation}
where  $a_i, b_i,c_i, e_i, f_i, g_i: U \rightarrow \R$, $i=1, 2$  are  smooth functions  with $a_1b_2-a_2b_1=0$. They are called the {\it basic invariants} of $(\bx, \bm {\nu}_1, \bnu_2).$ Denote $$\ba(u,v)=\begin{pmatrix}a_1(u,v)~a_2(u,v) \end{pmatrix}^{T}, \bb(u,v)=\begin{pmatrix}b_1(u,v)~b_2(u,v) \end{pmatrix}^{T}, \bc(u,v)=\begin{pmatrix}c_1(u,v)~c_2(u,v) \end{pmatrix}^{T},$$ where $\ba^T$ denotes  the transpose of $\ba$. Thus we can get
\begin{equation} \notag
\alpha(u,v)
=
\det\begin{pmatrix}
\bb(u,v)~\bc(u,v)  
\end{pmatrix}
,\;
\beta(u,v)
=
\det\begin{pmatrix}
\bc (u,v)~\ba (u,v) \end{pmatrix}
.
\end{equation}
Since $(\bx,\bnu_1,\bnu_2)$ is smooth,  $\bx_{uv}=\bx_{vu}$, $\bnu_{1uv}=\bnu_{1vu}$ and $\bnu_{2uv}=\bnu_{2vu}$ hold. It follows that we have the  integrability  conditions:
\begin{equation}\label{eq3}
\left\{	\begin{array}{lr}
a_{1v}-b_1e_2-c_1f_2=a_{2u}-b_2e_1-c_2f_1,\\
b_{1v}+a_1e_2-c_1g_2=b_{2u}+a_2e_1-c_2g_1,\\
c_{1v}+a_1f_2+b_1g_2=c_{2u}+a_2f_1+b_2g_1,
\end{array} 
\right.
\end{equation}
\begin{equation}\label{eq4}
\left\{	\begin{array}{lr}
e_{1v}-f_1g_2=e_{2u}-f_2g_1,\\
f_{1v}+e_1g_2+a_1c_2=f_{2u}+e_2g_1+a_2c_1,\\
g_{1v}-e_1f_2+b_1c_2=g_{2u}-e_2f_1+b_2c_1.
\end{array} 
\right.
\end{equation}

Then we give  fundamental theorems for hyperbolic generalized framed surfaces using their basic invariants.
\begin{definition}\rm
Let $(\bx, \bnu_1, \bnu_2),(\overline{\bx}, \overline{\bnu}_1, \overline{\bnu}_2):U\rightarrow H^3 \times \Delta_5$ be two hyperbolic generalized framed surfaces. We say that  $(\bx, \bnu_1, \bnu_2)$ and $(\overline{\bx}, \overline{\bnu}_1, \overline{\bnu}_2)$ are {\it{congruent}} as hyperbolic generalized framed surfaces  if there exists     $\bm{A}\in SO(1,3)$   such that 
$\overline{\bx}(u,v) = \bm{A}(\bx(u,v))$,  $\overline{\bnu}_1(u,v) = \bm{A}(\bnu_1(u,v))$,   $\overline{\bnu}_2(u,v) = \bm{A}(\bnu_2(u,v))$
for all $(u,v)\in U$. Here,  $$SO(1,3)=\left\{\bm{A}\in \bm{M}_4(\R)~\arrowvert~\bm{A}^T\bm{GA} = \bm{G}, \;\det(\bm{A}) = 1,\; \bm{G} =
\begin{pmatrix}
-1 & 0 & 0 & 0 \\
0 & 1 & 0 & 0 \\
0 & 0 & 1 & 0 \\
0 & 0 & 0 & 1
\end{pmatrix}\right\}.$$ 
\end{definition}

\begin{theorem}[Existence theorem for hyperbolic generalized framed surfaces]
Let   $a_i, b_i,c_i, e_i$, $f_i, g_i: U\rightarrow \R $, $i=1,2$ be smooth functions with $a_1b_2-a_2b_1=0$ and the integrability conditions $(\ref{eq3})$ and $(\ref{eq4})$. Then there exists a hyperbolic generalized framed surface $(\bx,\bnu_1,\bnu_2): U \rightarrow H^3 \times \Delta_5$ whose associated basic invariants are  $a_i, b_i,c_i, e_i, f_i, g_i: U\rightarrow \R $, $i=1,2$.
\begin{proof}
Since the integrability conditions  (\ref{eq3}) and (\ref{eq4}) exist,  the systems (\ref{eq1}) and (\ref{eq2}) have a solution  $(\bx, \bnu_1, \bnu_2,\bnu_3):U\rightarrow H^3 \times \Delta_5\times S_1^3$, which satisfies $\bx, \bnu_1, \bnu_2,\bnu_3$ are pseudo-orthogonal and $\bx\wedge\bnu_1\wedge \bnu_2=\bnu_3$. It follows that $(\bx\wedge\bx_u\wedge\bx_v)(u,v)=(b_1c_2-b_2c_1)(u,v)\bnu_1(u,v)+(c_1a_2-c_2a_1)(u,v)\bnu_2(u,v)$. 
Therefore, $(\bx, \bnu_1, \bnu_2):U\rightarrow H^3 \times \Delta_5$ is a hyperbolic generalized framed surface whose basic invariants are   $a_i, b_i,c_i, e_i, f_i, g_i $, $i=1,2$.
\end{proof}

\end{theorem}

\begin{theorem}[Uniqueness theorem for hyperbolic generalized framed surfaces]
Let $(\bx, \bnu_1, \bnu_2)$, $(\overline{\bx}, \overline{\bnu}_1, \overline{\bnu}_2):U\rightarrow H^3 \times \Delta_5$ be two hyperbolic generalized framed surfaces with the same basic invariants  $a_i, b_i,c_i, e_i, f_i, g_i: U\rightarrow \R $, $i=1,2$. Then $(\bx, \bnu_1, \bnu_2)$ and $(\overline{\bx}, \overline{\bnu}_1, \overline{\bnu}_2)$ are congruent as hyperbolic generalized framed surfaces.
\begin{proof}
Let $(\bx, \bnu_1, \bnu_2)$ and $(\overline{\bx}, \overline{\bnu}_1, \overline{\bnu}_2)$ be two  hyperbolic generalized framed surfaces  with the same basic invariants $a_i, b_i,c_i, e_i, f_i, g_i,$ $ i = 1,2.$ Let $\bnu_3(u,v)=(\bx \wedge \bnu_1 \wedge\bnu_2)(u,v)$ and  $\overline{\bnu}_3(u,v)=(\overline{\bx} \wedge \overline{\bnu}_1 \wedge\overline{\bnu}_2)(u,v).$ For some fixed point $(u_0, v_0) \in U,$ there exists $\bm{A}\in SO(1,3)$   satisfying
$\overline{\bx}(u_0,v_0) = \bm{A}\left(\bx(u_0,v_0)\right)$,   $\overline{\bnu}_1(u_0,v_0) = \bm{A}(\bnu_1(u_0,v_0))$, $ \overline{\bnu}_2(u_0,v_0) = \bm{A}(\bnu_2(u_0,v_0))$.
Then  we have $\overline{\bnu}_3(u_0,v_0) = \bm{A}(\bnu_3(u_0,v_0)).$
Consider two maps $(\bm{A}(\bx),\bm{A}(\bnu_1), \bm{A}(\bnu_2),\bm{A}(\bnu_3)) : U \rightarrow {H}^3 \times \Delta_5 \times S_1^3$ and $(\overline{\bx}, \overline{\bnu}_1, \overline{\bnu}_2,\overline{\bnu}_3) : U \rightarrow {H}^3 \times \Delta_5 \times S_1^3.$ They are both solutions of the systems (\ref{eq1}) and (\ref{eq2}). Thus  $(\bm{A}(\bx),\bm{A}(\bnu_1), \bm{A}(\bnu_2),\bm{A}(\bnu_3)) = (\overline{\bx}, \overline{\bnu}_1, \overline{\bnu}_2,\overline{\bnu}_3).$ That is to say $(\bx, \bnu_1, \bnu_2)$ and $(\overline{\bx}, \overline{\bnu}_1, \overline{\bnu}_2)$ are congruent as hyperbolic generalized framed surfaces.
\end{proof}
\end{theorem}
\subsection{Relations between hyperbolic generalized framed surfaces, hyperbolic framed surfaces and one-parameter families of hyperbolic framed curves.}
By the relations between basic invariants of hyperbolic generalized framed surfaces and hyperbolic framed surfaces, we have the following conclusion.
\begin{proposition}\label{prop5}
\begin{itemize}
\item[\rm (1)]  Let   $(\bx,\bn,\bs): U \rightarrow H^3 \times \Delta_5$  be  a   hyperbolic  framed surface with basic invariants $a_i,b_i,e_i,f_i,g_i$, $i=1,2$. Then $(\bx,\bn,\bs)$ is a  hyperbolic generalized framed surface with $\alpha(u,v)=(a_1b_2-a_2b_1)(u,v)$, $\beta(u,v)=0$.
\item[\rm(2)]  Let   $(\bx,\bnu_1,\bnu_2): U \rightarrow H^3 \times \Delta_5$  be  a   hyperbolic generalized  framed surface with basic invariants $a_i,b_i,c_i,e_i,f_i,g_i$, $i=1,2$.

 {\rm(i)} If $a_1(u,v)=a_2(u,v)=0$ for all $(u,v)\in U$, then $(\bx,\bnu_1,\bnu_2)$  is  a  hyperbolic framed surface.

 {\rm (ii)} If $b_1(u,v)=b_2(u,v)=0$ for all $(u,v)\in U$, then $(\bx,\bnu_2,\bnu_1)$  is  a  hyperbolic framed surface.
\end{itemize}
\begin{proof}
 {\rm (1)} If $(\bx,\bn,\bs)$   is  a   hyperbolic  framed surface, the normal vector of $\bx$ is $(\bx\wedge\bx_u\wedge\bx_v)(u,v)=(a_1b_2-a_2b_1)(u,v)\bn(u,v)$, which means that $(\bx,\bn,\bs)$ is a hyperbolic generalized framed surface with $\alpha(u,v)=(a_1b_2-a_2b_1)(u,v)$, $\beta(u,v)=0$. 
 
 {\rm (2)} If the basic invariants of hyperbolic generalized framed surface $(\bx,\bnu_1,\bnu_2)$ satisfy $a_1(u,v)=a_2(u,v)=0$ for all $(u,v)\in U$,   we can get $ \langle \bx_u,\bnu_1\rangle (u,v)= \langle \bx_v,\bnu_1\rangle (u,v)=\langle \bx,\bnu_1\rangle (u,v)=\langle \bx,\bnu_2\rangle (u,v)=\langle \bnu_1,\bnu_2\rangle (u,v)=0$ for all $(u,v)\in U$. It follows that $(\bx,\bnu_1,\bnu_2)$ is a hyperbolic framed surface. If $b_1(u,v)=b_2(u,v)=0$ for all $(u,v)\in U$, similar discussion can lead to the conclusion.
 \end{proof}
\end{proposition}
\begin{remark}\rm
Regular surfaces in hyperbolic 3-space  are hyperbolic framed base surfaces, and therefore, regular surfaces are also hyperbolic generalized framed base surfaces.
\end{remark}
We also discuss the relations between hyperbolic generalized framed surfaces and one-parameter families of hyperbolic framed curves.
\begin{proposition}\label{prop6}
\begin{itemize}
\item[\rm(1)]{\rm(i)} Let $(\bgamma ,\bnu_1,\bnu_2) : U\rightarrow H^3 \times \Delta_5$  be a one-parameter family of hyperbolic  framed curves with  respect to $ u $. Then $(\bgamma ,\bnu_1,\bnu_2) $ is a  hyperbolic generalized framed surface with $\alpha(u,v)=-m_1(u,v)Q_1(u,v)$, $\beta(u,v)=m_1(u,v)P_1(u,v)$.

 {\rm(ii)} Let $(\bgamma ,\bnu_1,\bnu_2) : U\rightarrow H^3 \times \Delta_5$  be a one-parameter family of hyperbolic  framed curves with  respect to $ v $. Then $(\bgamma ,\bnu_1,\bnu_2) $ is a  hyperbolic generalized framed surface with $\alpha(u,v)=m_2(u,v)Q_2(u,v)$, $\beta(u,v)=-m_2(u,v)P_2(u,v)$.
\item[\rm(2)]  Let   $(\bx,\bnu_1,\bnu_2): U \rightarrow H^3 \times \Delta_5$  be  a   hyperbolic generalized  framed surface with basic invariants $a_i,b_i,c_i,e_i,f_i,g_i$, $i=1,2$.

{\rm(i)} If $a_1(u,v)=b_1(u,v)=0$ for all $(u,v)\in U$, then $(\bx,\bnu_1,\bnu_2) $ and $(\bx,\bnu_2,\bnu_1)$ are  one-parameter families of hyperbolic  framed curves with  respect to $ u $.

{\rm(ii)} If $a_2(u,v)=b_2(u,v)=0$ for all $(u,v)\in U$, then $(\bx,\bnu_1,\bnu_2) $ and $(\bx,\bnu_2,\bnu_1)$ are  one-parameter families of hyperbolic  framed curves with  respect to $ v $.
\end{itemize}
\begin{proof}
 {\rm (1)} (i) If $(\bgamma ,\bnu_1,\bnu_2)$   is  a one-parameter family of hyperbolic  framed curves with  respect to $ u $,  then $(\bgamma\wedge\bgamma_u\wedge\bgamma_v)(u,v)=-m_1(u,v)Q_1(u,v)\bnu_1(u,v)+m_1(u,v)P_1(u,v)\bnu_2(u,v)$, and consequently,   $(\bgamma ,\bnu_1,\bnu_2)$ is a hyperbolic generalized framed surface with $\alpha(u,v)=-m_1(u,v)Q_1(u,v)$, $\beta(u,v)=m_1(u,v)P_1(u,v)$.
 For case (ii), a similar analysis can yield to the   conclusion.
 
 {\rm (2)}  If the basic invariants of hyperbolic generalized framed surface $(\bx,\bnu_1,\bnu_2)$ satisfy $a_1(u,v)=b_1(u,v)=0$ for all $(u,v)\in U$,   we can get $ \langle \bx_u,\bnu_1\rangle (u,v)= \langle \bx_u,\bnu_2\rangle (u,v)=\langle \bx,\bnu_1\rangle (u,v)=\langle \bx,\bnu_2\rangle (u,v)=\langle \bnu_1,\bnu_2\rangle (u,v)=0$ for all $(u,v)\in U$. It follows that $(\bx,\bnu_1,\bnu_2) $ and $(\bx,\bnu_2,\bnu_1)$ are  one-parameter families of hyperbolic  framed curves with  respect to $ u $. If $a_2(u,v)=b_2(u,v)=0$ for all $(u,v)\in U$, similar discussion can lead to the conclusion.
\end{proof}
\end{proposition}
 
\subsection{Reflection and rotation of frame  as well as a parameter change in  $U$.}

By direct calculations, we have the following.
\begin{proposition}
Let   $(\bx,\bnu_1,\bnu_2): U \rightarrow H^3 \times \Delta_5$  be  a   hyperbolic generalized  framed surface with basic invariants $a_i,b_i,c_i,e_i,f_i,g_i$, $i=1,2$. Then we have
\begin{itemize}
\item [\rm(1)] $(\bx,-\bnu_1,\bnu_2): U \rightarrow H^3 \times \Delta_5$  is  a   hyperbolic generalized  framed surface with basic invariants $-a_i,b_i,-c_i,-e_i,f_i,-g_i$, $i=1,2$.
\item [\rm(2)] $(\bx,-\bnu_1,-\bnu_2): U \rightarrow H^3 \times \Delta_5$  is  a   hyperbolic generalized  framed surface with basic invariants $-a_i,-b_i, c_i,e_i,-f_i,-g_i$, $i=1,2$.
\item [\rm(3)] $(\bx,\bnu_2,\bnu_1): U \rightarrow H^3 \times \Delta_5$  is  a   hyperbolic generalized  framed surface with basic invariants $b_i,a_i, -c_i,-e_i,-g_i,-f_i$, $i=1,2$.
\end{itemize}
\end{proposition}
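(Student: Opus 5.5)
The plan is to verify each of (1)--(3) directly from the definition of a hyperbolic generalized framed surface, and then to read off the new basic invariants by rewriting the frame equations (\ref{eq1}) and (\ref{eq2}) in the new frame. For the definition, the conditions $\langle \bx,\pm\bnu_j\rangle=0$ and $(\pm\bnu_1,\pm\bnu_2)\in\Delta_5$ are immediate, since $S_1^3$ is invariant under $\bnu\mapsto-\bnu$ and $\Delta_5$ is symmetric in its two factors. The normal condition also holds: $\bx\wedge\bx_u\wedge\bx_v=\alpha\bnu_1+\beta\bnu_2$ can be rewritten as $(-\alpha)(-\bnu_1)+\beta\bnu_2$, as $(-\alpha)(-\bnu_1)+(-\beta)(-\bnu_2)$, or as $\beta\bnu_2+\alpha\bnu_1$. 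So each triple is again a hyperbolic generalized framed surface, and only the computation of the basic invariants remains.

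The first step I would take is to identify the new third frame vector $\bnu_3'=\bx\wedge\bnu_1'\wedge\bnu_2'$. This follows from multilinearity and alternation of the pseudo vector product (it is a determinant):
\begin{itemize}
\item in case (1), $\bx\wedge(-\bnu_1)\wedge\bnu_2=-\bnu_3$;
\item in case (2), $\bx\wedge(-\bnu_1)\wedge(-\bnu_2)=\bnu_3$;
\item in case (3), swapping two entries gives $\bx\wedge\bnu_2\wedge\bnu_1=-\bnu_3$.
\end{itemize}

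The second step is to substitute into (\ref{eq1}) and (\ref{eq2}) and express each derivative in the new frame $\{\bx,\bnu_1',\bnu_2',\bnu_3'\}$. Take case (1) as an illustration. We have
\[
\bx_u=(-a_1)(-\bnu_1)+b_1\bnu_2+(-c_1)(-\bnu_3),
\]
\[
(-\bnu_1)_u=(-a_1)\bx+(-e_1)\bnu_2+f_1(-\bnu_3).
\]
Comparing with the pattern of (\ref{eq1}) gives $a'=-a_1$, $b'=b_1$, $c'=-c_1$, $e'=-e_1$ and $f'=f_1$. The equation for $\bnu_{2u}$ then yields $g'=-g_1$, and the equation for $\bnu_{3u}$ must be checked to be consistent with these values. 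The same substitution works for the $v$-derivatives.

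Case (3) is handled in the same way. Here $\bx_u=b_1\bnu_1'+a_1\bnu_2'+(-c_1)\bnu_3'$ and $\bnu_{1u}'=\bnu_{2u}=b_1\bx+(-e_1)\bnu_2'+(-g_1)\bnu_3'$, which give $e'=-e_1$ and $f'=-g_1$. Then $\bnu_{2u}'=\bnu_{1u}$ gives $g'=-f_1$. Finally, the constraint $a_1'b_2'-a_2'b_1'=0$ is preserved in every case, since it only changes by a sign or by swapping $\ba$ and $\bb$.

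The only real obstacle is sign bookkeeping. In each case, the entries coming from $\bnu_2'$ and $\bnu_3'$ must be matched to the skew pattern ($-e$, $-f$, $-g$ below the diagonal) of the coefficient matrix. I would check every row explicitly rather than rely on symmetry, and as a cross-check confirm that $\alpha'=\det(\bb'~\bc')$ and $\beta'=\det(\bc'~\ba')$ agree with the $\alpha,\beta$ obtained in the first paragraph.
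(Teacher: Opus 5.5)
Your proposal is correct and is the direct calculation the paper relies on; the paper gives no more detail than ``by direct calculations.'' You identify the new third vector as $-\bnu_3$, $\bnu_3$, $-\bnu_3$ in cases (1), (2), (3) respectively and re-expand (\ref{eq1}) and (\ref{eq2}) in the new frame; redoing the sign bookkeeping in all three cases yields exactly the stated invariants.
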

Let   $(\bx,\bnu_1,\bnu_2): U \rightarrow H^3 \times \Delta_5$  be  a   hyperbolic generalized  framed surface with basic invariants $a_i,b_i,c_i,e_i,f_i,g_i$, $i=1,2$. We denote
\begin{equation}\label{eqr}
\begin{pmatrix}
\bnu_1^{\theta}(u,v)\\
\bnu_2^{\theta}(u,v)
\end{pmatrix}
=\begin{pmatrix}
\cos\theta(u,v)&-\sin\theta(u,v)\\
\sin\theta(u,v)&\cos\theta(u,v)
\end{pmatrix}
\begin{pmatrix}
\bnu_1 (u,v)\\
\bnu_2 (u,v)
\end{pmatrix},
\end{equation}
where $\theta:U\rightarrow\R$ is a smooth function. It can be verified that
$\langle \bx, \bnu_1^{\theta}\rangle(u,v)=\langle \bx, \bnu_2^{\theta}\rangle(u,v)=0$ and
\begin{equation}\notag
(\bx\wedge\bx_u\wedge\bx_v)(u,v)
=(\alpha\cos\theta-\beta\sin\theta)(u,v)\bnu_1^{\theta}(u,v)+(\alpha\sin\theta+\beta\cos\theta)(u,v)\bnu_2^{\theta}(u,v),
\end{equation}
then $(\bx,\bnu_1^{\theta},\bnu_2^{\theta}): U \rightarrow H^3 \times \Delta_5$  is also  a   hyperbolic generalized  framed surface with 
$\alpha^{\theta}(u,v)=(\alpha\cos\theta-\beta\sin\theta)(u,v)$ and $\beta^{\theta}(u,v)=(\alpha\sin\theta+\beta\cos\theta)(u,v)$.
\begin{proposition}\label{3.10}
Under the above notations,  for any smooth function $\theta:U\rightarrow\R$, the basic invariants  $ (a^{\theta}_i,b^{\theta}_i,c^{\theta}_i,e^{\theta}_i,f^{\theta}_i,g^{\theta}_i) $  $ (i=1,2) $ of $(\bx,\bnu^{\theta}_1,\bnu^{\theta}_2)$  are given by
\begin{equation}\notag
	\begin{aligned}
\begin{pmatrix}
	a_1^{\theta}(u,v)&b_1^{\theta}(u,v)&c_1^{\theta}(u,v)\\
	a_2^{\theta}(u,v)&b_2^{\theta}(u,v)&c_2^{\theta}(u,v)
\end{pmatrix}
&=	\begin{pmatrix}
	a_1(u,v)&b_1(u,v)&c_1(u,v)\\
	a_2(u,v)&b_2(u,v)&c_2(u,v)
\end{pmatrix}
\begin{pmatrix}
	\cos\theta(u,v)&\sin\theta(u,v)&0\\
	-\sin\theta(u,v)&\cos\theta(u,v)&0\\
	0&0&1
\end{pmatrix},\\
\begin{pmatrix}
	e_1^{\theta}(u,v) \\
	e_2^{\theta}(u,v) 
\end{pmatrix}
&=	\begin{pmatrix}
	e_1(u,v)-\theta_u(u,v)\\
	e_2(u,v)-\theta_v(u,v)
\end{pmatrix},\\
\begin{pmatrix}
	f_1^{\theta}(u,v)&g_1^{\theta}(u,v)\\
	f_2^{\theta}(u,v)&g_2^{\theta}(u,v) 
\end{pmatrix}
&=
\begin{pmatrix}
	f_1(u,v)&g_1(u,v) \\
	f_2(u,v)&g_2 (u,v)
\end{pmatrix}
\begin{pmatrix}
	\cos\theta(u,v)&\sin\theta(u,v) \\
	-\sin\theta(u,v)&\cos\theta(u,v) 
\end{pmatrix}.
	\end{aligned}
\end{equation}
\end{proposition}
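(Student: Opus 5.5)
The plan is to compute the new basic invariants directly from the frame equations (\ref{eq1}) and (\ref{eq2}), using the rotation (\ref{eqr}). The first step is to identify the new third frame vector $\bnu_3^{\theta}=\bx\wedge\bnu_1^{\theta}\wedge\bnu_2^{\theta}$. The pseudo vector product is multilinear and alternating, so the mixed terms cancel and
\[
\bx\wedge\bnu_1^{\theta}\wedge\bnu_2^{\theta}=(\cos^2\theta+\sin^2\theta)\,\bx\wedge\bnu_1\wedge\bnu_2=\bnu_3 .
\]
Therefore $\bnu_3^{\theta}=\bnu_3$. Inverting the rotation gives
\[
\bnu_1=\cos\theta\,\bnu_1^{\theta}+\sin\theta\,\bnu_2^{\theta},\qquad \bnu_2=-\sin\theta\,\bnu_1^{\theta}+\cos\theta\,\bnu_2^{\theta},
\]
and these expressions are used to re-expand everything in the new frame.

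Second, I treat the $\bx$-row. From (\ref{eq1}),
\[
\bx_u=a_1\bnu_1+b_1\bnu_2+c_1\bnu_3=(a_1\cos\theta-b_1\sin\theta)\bnu_1^{\theta}+(a_1\sin\theta+b_1\cos\theta)\bnu_2^{\theta}+c_1\bnu_3 .
\]
This gives the first row of the stated matrix identity. The $v$-row is identical, using (\ref{eq2}).

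Third, I treat the $\bnu_3$-row. From (\ref{eq1}), $\bnu_{3u}=c_1\bx-f_1\bnu_1-g_1\bnu_2$. Substituting the expressions for $\bnu_1,\bnu_2$ and reading off the coefficients gives
\[
f_1^{\theta}=f_1\cos\theta-g_1\sin\theta,\qquad g_1^{\theta}=f_1\sin\theta+g_1\cos\theta .
\]
This matches the stated product with the rotation matrix. The coefficient of $\bx$ again confirms $c_1^{\theta}=c_1$.

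Finally, I compute $e_1^{\theta}=\langle\bnu_{1u}^{\theta},\bnu_2^{\theta}\rangle$. Differentiating (\ref{eqr}) gives
\[
\bnu^{\theta}_{1u}=-\theta_u\sin\theta\,\bnu_1-\theta_u\cos\theta\,\bnu_2+\cos\theta\,\bnu_{1u}-\sin\theta\,\bnu_{2u}.
\]
Pairing with $\bnu_2^{\theta}=\sin\theta\,\bnu_1+\cos\theta\,\bnu_2$ and using $\langle\bnu_{1u},\bnu_2\rangle=e_1$, $\langle\bnu_{2u},\bnu_1\rangle=-e_1$, $\langle\bnu_{iu},\bnu_i\rangle=0$ yields $-\theta_u+e_1$. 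The same computation in $v$ gives $e_2^{\theta}=e_2-\theta_v$.

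The remaining entries of the new frame matrix are forced by skew-symmetry with respect to the metric $\mathrm{diag}(-1,1,1,1)$, so no separate computation is needed for them. The only real care point is the sign bookkeeping in the $e$-computation and in the orientation of $\bnu_3^{\theta}$; everything else is routine linear substitution.
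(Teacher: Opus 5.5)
Your proposal is correct. The identity $\bnu_3^{\theta}=\bnu_3$, the re-expansions of $\bx_u$ and $\bnu_{3u}$ in the rotated frame, and the computation $e_1^{\theta}=\langle\bnu^{\theta}_{1u},\bnu^{\theta}_2\rangle=e_1-\theta_u$ all check out, and the remaining entries are indeed forced by the pseudo-orthonormal structure of the frame equations.

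The paper gives no written proof beyond asserting these formulas as a direct calculation, and your argument carries out exactly that calculation.
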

Next, we consider a parameter change of the domain $U$.
\begin{proposition}
Let   $(\bx,\bnu_1,\bnu_2): U \rightarrow H^3 \times \Delta_5$  be  a   hyperbolic generalized  framed surface with   basic invariants $a_i,b_i,c_i,e_i,f_i,g_i$, $i=1,2$ and $\bm{\phi}:V\rightarrow U$ be a paremeter change, where $\bm{\phi}(p,q)=(u(p,q),v(p,q))$. Then $(\overline{\bx},\overline{\bnu}_1,\overline{\bnu}_2)=(\bx,\bnu_1,\bnu_2)\circ\bm{\phi}:V\rightarrow H^3\times\Delta_5$  is a hyperbolic generalized  framed surface with
\begin{equation}\notag
\overline{\alpha}(p,q)=\alpha\circ\bm{\phi}(p,q) \det\begin{pmatrix}
	u_p(p,q)&v_p(p,q)\\
	u_q(p,q)&v_q(p,q)\end{pmatrix}
,\;
\overline{\beta}(p,q)=\beta\circ\bm{\phi}(p,q) \det\begin{pmatrix}
	u_p(p,q)&v_p(p,q)\\
	u_q(p,q)&v_q(p,q)\end{pmatrix}
\end{equation}
and the basic invariants
\begin{equation}\notag
\begin{pmatrix}
	\overline{a}_1(p,q)&\overline{b}_1(p,q) &\overline{c}_1(p,q) \\
	\overline{a}_2 (p,q)&\overline{b}_2(p,q)&\overline{c}_2(p,q) 
\end{pmatrix}=
\begin{pmatrix}
	u_p(p,q)&v_p(p,q)\\
	u_q(p,q)&v_q(p,q)
\end{pmatrix}
\begin{pmatrix}
	a_1\circ\bm{\phi}(p,q)&b_1\circ\bm{\phi}(p,q) &c_1\circ\bm{\phi}(p,q) \\
	a_2\circ\bm{\phi} (p,q)&b_2\circ\bm{\phi}(p,q)&c_2\circ\bm{\phi}(p,q) 
\end{pmatrix},
\end{equation}
\begin{equation}\notag
\begin{pmatrix}
	\overline{e}_1(p,q)&\overline{f}_1(p,q) &\overline{g}_1(p,q) \\
	\overline{e}_2 (p,q)&\overline{f}_2(p,q)&\overline{g}_2(p,q) 
\end{pmatrix}=
\begin{pmatrix}
	u_p(p,q)&v_p(p,q)\\
	u_q(p,q)&v_q(p,q)
\end{pmatrix}
\begin{pmatrix}
	e_1\circ\bm{\phi}(p,q)&f_1\circ\bm{\phi}(p,q) &g_1\circ\bm{\phi}(p,q) \\
	e_2 \circ\bm{\phi}(p,q)&f_2\circ\bm{\phi}(p,q)&g_2\circ\bm{\phi}(p,q) 
\end{pmatrix}.
\end{equation}
\end{proposition}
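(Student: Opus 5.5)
The plan is to apply the chain rule to the composed maps and then read off the new invariants by taking pseudo inner products with the unchanged frame.

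First, the frame conditions carry over pointwise. Put $\overline{\bx}=\bx\circ\bm{\phi}$, $\overline{\bnu}_i=\bnu_i\circ\bm{\phi}$. Since $(\bnu_1,\bnu_2)(\bm{\phi}(p,q))\in\Delta_5$ and $\langle \bx,\bnu_i\rangle=0$ hold pointwise on $U$, they hold for the composed maps on $V$. The new frame vector is then
\[
\overline{\bnu}_3=\overline{\bx}\wedge\overline{\bnu}_1\wedge\overline{\bnu}_2=\bnu_3\circ\bm{\phi}.
\]

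Next, the chain rule gives
\[
\overline{\bx}_p=u_p\,\bx_u\circ\bm{\phi}+v_p\,\bx_v\circ\bm{\phi},\qquad
\overline{\bx}_q=u_q\,\bx_u\circ\bm{\phi}+v_q\,\bx_v\circ\bm{\phi}.
\]
The pseudo vector product is multilinear and alternating, so
\[
\overline{\bx}\wedge\overline{\bx}_p\wedge\overline{\bx}_q=(u_pv_q-u_qv_p)\,(\bx\wedge\bx_u\wedge\bx_v)\circ\bm{\phi}.
\]
The right-hand side equals $J\,(\alpha\circ\bm{\phi})\,\overline{\bnu}_1+J\,(\beta\circ\bm{\phi})\,\overline{\bnu}_2$, where $J=\det\begin{pmatrix}u_p&v_p\\u_q&v_q\end{pmatrix}$. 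This gives the stated $\overline{\alpha}$ and $\overline{\beta}$, and shows that $(\overline{\bx},\overline{\bnu}_1,\overline{\bnu}_2)$ is a hyperbolic generalized framed surface.

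Finally, for the basic invariants, I would apply the same chain rule to $\overline{\bnu}_1,\overline{\bnu}_2,\overline{\bnu}_3$. Take $\overline{\bx}_p$, $\overline{\bnu}_{1p}$, etc., substitute (\ref{eq1}) and (\ref{eq2}) evaluated at $\bm{\phi}$, and collect the coefficients of $\overline{\bx},\overline{\bnu}_1,\overline{\bnu}_2,\overline{\bnu}_3$. For instance,
\[
\overline{a}_1=\langle\overline{\bx}_p,\overline{\bnu}_1\rangle=u_p\,a_1\circ\bm{\phi}+v_p\,a_2\circ\bm{\phi},
\]
and likewise for $\overline{b}_i,\overline{c}_i,\overline{e}_i,\overline{f}_i,\overline{g}_i$. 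Collecting these identities gives exactly the matrix identities in the statement, with the Jacobian-type matrix acting on the left.

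The condition $\overline{a}_1\overline{b}_2-\overline{a}_2\overline{b}_1=0$ follows automatically, since this quantity equals $J\,(a_1b_2-a_2b_1)\circ\bm{\phi}$. No step is a genuine obstacle; the only care needed is matching the row convention $\begin{pmatrix}u_p&v_p\\u_q&v_q\end{pmatrix}$ with the ordering of the $p$- and $q$-derivatives, and the sign of $J$ in $\overline{\alpha},\overline{\beta}$.
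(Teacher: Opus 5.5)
Your proposal is correct and is exactly the direct chain-rule computation the paper intends; the paper states this proposition without writing out a proof. Your argument fills it in: multilinearity of the pseudo vector product produces the factor $\det\begin{pmatrix}u_p&v_p\\u_q&v_q\end{pmatrix}$, and expanding $\overline{\bx}_p=u_p\,\bx_u\circ\bm{\phi}+v_p\,\bx_v\circ\bm{\phi}$ and its analogues in the unchanged frame gives the invariants, including the compatibility check $\overline{a}_1\overline{b}_2-\overline{a}_2\overline{b}_1=J\,(a_1b_2-a_2b_1)\circ\bm{\phi}=0$.
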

\subsection{Relations between hyperbolic generalized framed base surfaces, hyperbolic framed  base surfaces, one-parameter families of hyperbolic framed base curves  and smooth surfaces.}
Firstly, we  give   conditions for a hyperbolic generalized framed base surface to be a hyperbolic framed base surface or a one-parameter family of hyperbolic framed base curves.
\begin{proposition}\label{th11}
Let $(\bx,\bnu_1,\bnu_2):U\rightarrow H^3\times\Delta_5$ be a hyperbolic generalized framed surface with    basic invariants $a_i,b_i,c_i,e_i,f_i,g_i$, $i=1,2$. If there exists a smooth function $\theta:U\rightarrow \R$ such that $(a_1\cos\theta-b_1\sin\theta)(u,v)=(a_2\cos\theta-b_2\sin\theta)(u,v)=0$   holds for all $(u,v)\in U$, then $\bx$ is a hyperbolic framed base surface.
\begin{proof}
Consider the hyperbolic generalized framed surface $(\bx,\bnu_1^{\theta},\bnu_2^{\theta})$, where $\bnu_1^{\theta}$, $\bnu_2^{\theta}$ are defined as equation (\ref{eqr}). By Proposition \ref{3.10}, we obtain the basic invariants of $(\bx,\bnu_1^{\theta},\bnu_2^{\theta})$ satisfy $a_1^{\theta}(u,v)=a_2^{\theta}(u,v)=0$ for all $(u,v)\in U$.	 It follows from Proposition \ref{prop5} that $(\bx,\bnu_1^{\theta},\bnu_2^{\theta})$ is a hyperbolic framed surface and $\bx$ is a hyperbolic framed base surface.
\end{proof}
\end{proposition}
\begin{proposition}\label{prop13}
Let $(\bx,\bnu_1,\bnu_2):U\rightarrow H^3\times\Delta_5$ be a hyperbolic generalized framed surface with    basic invariants $a_i,b_i,c_i,e_i,f_i,g_i$, $i=1,2$. 
\begin{itemize}
\item[\rm(1)] Assume there exists a smooth function $\theta:U\rightarrow\R$ such that $(a_1\cos\theta-b_1\sin\theta)(u,v)=0$ for all $(u,v)\in U$.
 
{\rm(i)}  If $c_1(u,v)=0$ for all $(u,v)\in U$, then $\bx$ is a one-parameter family of hyperbolic framed base curves with respect to $u$.

{\rm(ii)}  If there exists a point $(u_0,v_0)\in U$ such that $ c_1(u_0,v_0)\neq0 $, then $\bx$ is a one-parameter family of hyperbolic framed base curves with respect to $u$ at least locally around $(u_0,v_0)$.
 
\item[\rm(2)] Assume there exists a smooth function $\theta:U\rightarrow\R$ such that $(a_2\cos\theta-b_2\sin\theta)(u,v)=0$ for all $(u,v)\in U$.
 
 {\rm(i)} If $c_2(u,v)=0$ for all $(u,v)\in U$, then $\bx$ is a one-parameter family of hyperbolic framed base curves with respect to $v$.

 {\rm(ii)}  If there exists a point $(u_0,v_0)\in U$ such that $ c_2(u_0,v_0)\neq0 $, then $\bx$ is a one-parameter family of hyperbolic framed base curves with respect to $v$ at least locally around $(u_0,v_0)$.
 
\end{itemize}
\begin{proof}
{\rm(1) (i)} If $(a_1\cos\theta-b_1\sin\theta)(u,v)=c_1(u,v)=0$ for all $(u,v)\in U$, we have $a_1^{\theta}(u,v)=c_1^{\theta}(u,v)=0$ for all $(u,v)\in U$ by Proposition \ref{3.10}, where $ a_1^{\theta}, c_1^{\theta} $ are the basic invariants of $(\bx,\bnu_1^{\theta},\bnu_2^{\theta})$. Denote $\bnu^{\theta}_3(u,v)=(\bx\wedge\bnu_1^{\theta} \wedge\bnu_2^{\theta})(u,v)$. Then Proposition \ref{prop6} shows that $(\bx,\bnu_1^{\theta},\bnu_3^{\theta})$ and $(\bx,\bnu_3^{\theta},\bnu_1^{\theta})$ are   one-parameter families of hyperbolic framed   curves with respect to $u$ and $\bx$ is a one-parameter family of hyperbolic framed base curves with respect to $u$.

{\rm(ii)} If there exists a point $(u_0,v_0)\in U$ such that $ c_1(u_0,v_0)\neq0 $, then $c_1(u,v)\neq0$ at least locally around $(u_0,v_0)$. Take $ \overline{\bnu}_1(u,v)=\bnu_1^{\theta}(u,v)$ and $$ \overline{\bnu}_2(u,v)=\frac{-b_1^{\theta}\bnu^{\theta}_3 +c_1\bnu_2^{\theta}}{\sqrt{(b_1^{\theta})^2+c_1^2}}(u,v).$$   We can verify that  $\langle \bx_u,\overline{\bnu}_1\rangle(u,v)=\langle \bx_u,\overline{\bnu}_2\rangle(u,v)=\langle \overline{\bnu}_1,\overline{\bnu}_2\rangle(u,v)=\langle \bx,\overline{\bnu}_1\rangle(u,v)=\langle \bx,\overline{\bnu}_2\rangle(u,v)=0$ at least locally around $(u_0,v_0)$, which implies that $(\bx,\overline{\bnu}_1,\overline{\bnu}_2)$ and $(\bx,\overline{\bnu}_2,\overline{\bnu}_1)$ are  one-parameter families of hyperbolic framed   curves with respect to $u$ and $\bx$ is a one-parameter family of hyperbolic framed base curves with respect to $u$ at least locally around $(u_0,v_0)$.	

For case (2), a similar analysis can lead to the   conclusion.
\end{proof}
\end{proposition}
We give a condition for a smooth surface to be a hyperbolic generalized framed base surface.
\begin{theorem}\label{th7}
Let $\bx:U\rightarrow H^3$ be a smooth surface. Then $\bx$ is a hyperbolic generalized framed base surface if and only if there exists a smooth map $\bnu:U\rightarrow S_1^3$ such that $\langle \bx,\bnu\rangle(u,v)=\langle \bx\wedge\bx_u\wedge\bx_v,\bnu\rangle(u,v)=0$ for all $(u,v)\in U$.
\begin{proof}
Since  $\bx$ is a hyperbolic generalized framed base surface, there exists $(\bnu_1,\bnu_2) : U \rightarrow \Delta_5$ such that $(\bx,\bnu_1,\bnu_2)$ is a hyperbolic generalized framed surface. Then there are  smooth functions $\alpha,\beta:U\rightarrow\R$ such that $(\bx\wedge\bx_u\wedge\bx_v)(u,v)=\alpha(u,v)\bnu_1(u,v)+\beta(u,v)\bnu_2(u,v)$. Let $\bnu(u,v)=(\bx\wedge\bnu_1\wedge\bnu_2)(u,v)$. Then $\langle \bx,\bnu\rangle(u,v)=\langle \bx\wedge\bx_u\wedge\bx_v,\bnu\rangle(u,v)=0$ for all $(u,v)\in U$. 

Conversely, assume that $\bnu:U\rightarrow S_1^3$ satisfying $\langle \bx,\bnu\rangle(u,v)=\langle \bx\wedge\bx_u\wedge\bx_v,\bnu\rangle(u,v)=0$ for all $(u,v)\in U$. Denote $\bnu(u,v)=(a,b,c,d)(u,v)$ and $\bx(u,v)=(x_0,x_1,x_2,x_3)(u,v)$, then there exist smooth functions $\theta,\varphi:U\rightarrow\R$ such that $\bnu$ can be expressed   as
\begin{equation}\notag
\bnu(u,v)=\left(a,\sqrt{1+a^2}\sin\theta\sin\varphi,\sqrt{1+a^2}\sin\theta\cos\varphi,\sqrt{1+a^2}\cos\theta\right)(u,v).  
\end{equation}
Since  \begin{equation}\notag
\langle \bx,\bnu\rangle(u,v)=\left(-ax_0+x_1\sqrt{1+a^2}\sin\theta\sin\varphi+x_2\sqrt{1+a^2}\sin\theta\cos\varphi+x_3\sqrt{1+a^2}\cos\theta\right)(u,v)=0,
\end{equation}
we have  \begin{equation} \label{eq5}
\left(x_1\sin\theta\sin\varphi+x_2\sin\theta\cos\varphi+x_3\cos\theta\right)(u,v)=\frac{ax_0}{\sqrt{1+a^2}}(u,v).
\end{equation}
Denote \begin{equation}\notag
\begin{aligned}
&\overline{\bnu}_1(u,v)=(0,\cos\theta\sin\varphi,\cos\theta\cos\varphi,-\sin\theta)(u,v),\\
&\overline{\bnu}_2(u,v)=(0,\cos\varphi,-\sin\varphi,0)(u,v).
\end{aligned}
\end{equation}
By equation (\ref{eq5}), we have 
\begin{equation}\notag
\begin{aligned}
&(\bx\wedge\overline{\bnu}_1\wedge\overline{\bnu}_2)(u,v)\\
=&\left(x_1\sin\theta\sin\varphi+x_2\sin\theta\cos\varphi+x_3\cos\theta,x_0\sin\theta\sin\varphi,x_0\sin\theta\cos\varphi,x_0\cos\theta\right)(u,v)\\
=&\left(\frac{ax_0}{\sqrt{1+a^2}},x_0\sin\theta\sin\varphi,x_0\sin\theta\cos\varphi,x_0\cos\theta\right)(u,v)\\
=&\frac{x_0}{\sqrt{1+a^2}}(u,v)\left(a,\sqrt{1+a^2}\sin\theta\sin\varphi,\sqrt{1+a^2}\sin\theta\cos\varphi,\sqrt{1+a^2}\cos\theta\right)(u,v)\\
=&\frac{x_0}{\sqrt{1+a^2}}(u,v)\bnu(u,v).
\end{aligned}
\end{equation}
Since $\bx$, $\overline{\bnu}_1$, $\overline{\bnu}_2$ are linearly independent  on the hyperplane pseudo-orthogonal to $\bnu$, there exist $\bnu_1,\bnu_2:U\rightarrow S_1^3$ such that  $\bx$, $ \bnu_1 $, $\bnu_2 $ are pseudo-orthogonal and $\bx\wedge \bnu_1\wedge\bnu_2$ is parallel to $\bnu$. It follows that    $\left\{\bx,\bnu_1,\bnu_2,\bnu\right\}$  is a moving frame along $\bx$ and $\langle\bx,\bnu_1\rangle(u,v)=\langle\bx,\bnu_2\rangle(u,v)=0$.
Hence, there exist smooth functions $\alpha,\beta:U\rightarrow\R$ such that $(\bx\wedge\bx_u\wedge\bx_v)(u,v)=\alpha(u,v)\bnu_1(u,v)+\beta(u,v)\bnu_2(u,v)$, which means that $(\bx,\bnu_1,\bnu_2)$ is a hyperbolic generalized framed surface.
\end{proof}
\end{theorem}
\begin{remark} \rm
If we directly find $\bnu_1,\bnu_2$ in the proof of Theorem \ref{th7}, for linearly independent vectors $\bx,\overline{\bnu}_1,\overline{\bnu}_2$, by Schmidt orthogonalization, we get non-zero vectors $\widetilde{\bnu}_1=\overline{\bnu}_1-k\bx$ satisfying $\langle\widetilde{\bnu}_1,\bx\rangle=0$ and $\widetilde{\bnu}_2=\overline{\bnu}_2-k_1\bx-k_2\widetilde{\bnu}_1$ satisfying $\langle\widetilde{\bnu}_2,\bx\rangle=\langle\widetilde{\bnu}_2,\widetilde{\bnu}_1\rangle=0$, where $k,k_1,k_2:U\rightarrow \R$ are smooth functions.
It can be derived through calculation that
$\vert\widetilde{\bnu}_1 \vert^2= 1+k^2>0 $ and $\vert\widetilde{\bnu}_2 \vert^2= 1+k_1^2/(1+k^2) >0$.
Furthermore, we have
\begin{equation}\notag
\begin{aligned}
 \bnu_1(u,v)=\frac{\widetilde{\bnu}_1}{\vert\widetilde{\bnu}_1\vert}(u,v) &=  \frac{(-kx_0,-kx_1+\cos\theta\sin\varphi,-kx_2+\cos\theta\cos\varphi,-kx_3-\sin\theta)}{\sqrt{1+k^2 }}(u,v), \\
 \bnu_2(u,v)=\frac{\widetilde{\bnu}_2}{\vert\widetilde{\bnu}_2\vert}(u,v) &=  \frac{\sqrt{1+k^2}}{\sqrt{1+k^2+k_1^2}}(u,v)(-k_1x_0+kk_2x_0,\cos\varphi-k_1x_1+kk_2x_1-k_2\cos\theta\sin\varphi,\\
&~~~-\sin\varphi-k_1x_2+kk_2x_2-k_2\cos\theta\cos\varphi,-k_1x_3+kk_2x_3+k_2\sin\theta)(u,v),
\end{aligned}
\end{equation}
where 	\begin{equation}\notag
\begin{aligned}
k(u,v)&=(-x_1\cos\theta\sin\varphi-x_2\cos\theta\cos\varphi+x_3\sin\theta)(u,v),\\
k_1(u,v)&= (-x_1\cos\varphi+x_2\sin\varphi)(u,v),\\
k_2(u,v)&= \frac{-kx_1\cos\varphi+kx_2\sin\varphi}{-(kx_0)^2+(-kx_1+\cos\theta\sin\varphi)^2+(-kx_2+\cos\theta\cos\varphi)^2 +(kx_3+\sin\theta)^2}(u,v).
\end{aligned}
\end{equation}
\end{remark}
\subsection{Singularities of hyperbolic generalized framed base surfaces}
We say a point $\bm{p}$   is a cross cap singular point (respectively, $S_1^{\pm}$ singular point) of $\bx$  if $\bx$ is $ \mathcal{A} $-equivalent to $(u,v)\mapsto(u,v^2,uv)$ (respectively, $(u,v)\mapsto(u,v^2,v(u^2\pm v^2))$) at $\bm{p}$. We  use the following  Lemma   to obtain the conditions for   cross cap and $S_1^{\pm}$ singular points.
\begin{lemma}[\cite{S2010,W1943}]\label{lemma2.17}
Let    $ \bx:( U,\bm{p}  )  \rightarrow  (H^3,\bx(\bm{p}) )  $ be a map germ and $\bm{p}=(u_0,v_0)$ a corank one sigular point of $\bx$. Then,
\begin{itemize}
\item[\rm(1)] $\bx$ at   $\bm{p}$ is $ \mathcal{A} $-equivalent to cross cap 
if and only if $d\varphi(u_0,v_0)\neq\bm{0}$.
\item[\rm(2)] $\bx$ at   $\bm{p}$ is $ \mathcal{A} $-equivalent to $S_1^{+}$ singular point
if and only if $d\varphi(u_0,v_0)=\bm{0}$, $\det {\rm Hess} \;\varphi(u_0,v_0)<0$,   $\xi\bx(\bm{p})$ and $\eta\eta\bx(\bm{p})$ are linearly independent.
\item[\rm(3)] $\bx$ at   $\bm{p}$ is $ \mathcal{A} $-equivalent to $S_1^{-}$ singular point
if and only if $d\varphi(u_0,v_0)=\bm{0}$ and $\det {\rm Hess} \;\varphi(u_0,v_0)>0$. 
\end{itemize}
Here, $\varphi(u,v)=\det(\bx,\xi\bx, \eta\bx,\eta\eta\bx)(u_0,v_0)$, $\xi$ is a vector field transverse to the null vector field  $\eta$ and $\det {\rm Hess} \;\varphi(u,v)=(\varphi_{uu}\varphi_{vv}-\varphi^2_{uv})(u,v)$.
\end{lemma}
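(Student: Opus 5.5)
The plan is to reduce the statement to the known Euclidean criteria of Whitney and Saji (\cite{W1943,S2010}) by working in a chart of $H^3$. The key observation is that the function $\varphi=\det(\bx,\xi\bx,\eta\bx,\eta\eta\bx)$, computed in $\R_1^4$, agrees with the corresponding Euclidean identifier of singularities up to a nonvanishing factor and a term that does not affect the relevant jets at $\bm{p}$.

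First I fix a chart. The projection $\pi:H^3\rightarrow\R^3$, $\pi(x_0,x_1,x_2,x_3)=(x_1,x_2,x_3)$, is a diffeomorphism with inverse $\by\mapsto(\sqrt{1+|\by|^2},\by)$. Since $\mathcal{A}$-equivalence is invariant under diffeomorphisms of the target, $\bx$ at $\bm{p}$ is a cross cap, respectively an $S_1^{\pm}$ singular point, if and only if $\by=\pi\circ\bx$ is. The point $\bm{p}$ is a corank one singular point of both maps, and the null vector field $\eta$ and the transverse field $\xi$ are the same for both, because $d\pi$ is an isomorphism. For $\by$, the Euclidean criteria of \cite{S2010,W1943} hold with the identifier $\psi=\det(\xi\by,\eta\by,\eta\eta\by)$, and in case (2) they also require that $\xi\by(\bm{p})$ and $\eta\eta\by(\bm{p})$ be linearly independent.

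Next I compare $\varphi$ with $\psi$. Since $\langle\bx,\bx\rangle=-1$, the vectors $\xi\bx$ and $\eta\bx$ are tangent to $H^3$. Differentiating once more gives $\eta\eta\bx=T+\lambda\bx$, where $T$ is tangent to $H^3$ and $\lambda=\langle\eta\bx,\eta\bx\rangle$; the $\bx$-component drops out of the determinant. For tangent vectors $\bm{w}=(w_0,\bm{w}')$ at $\bx$ we have $w_0=\langle\by,\bm{w}'\rangle/x_0$. A direct column reduction then gives
$$\det(\bx,\bm{w}_1,\bm{w}_2,\bm{w}_3)=\pm\frac{1}{x_0}\det(\bm{w}_1',\bm{w}_2',\bm{w}_3').$$
Projecting, $\pi(T)=\eta\eta\by-\lambda\by$. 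Hence
$$\varphi=\pm\frac{1}{x_0}\bigl(\psi-\langle\eta\bx,\eta\bx\rangle\det(\xi\by,\eta\by,\by)\bigr).$$

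The correction term is where the main difficulty lies. It is cubic in the components of $\eta\by$: one factor comes from $\det(\xi\by,\eta\by,\by)$ and two from $\langle\eta\bx,\eta\bx\rangle$, where $\eta\bx=d\pi^{-1}(\eta\by)$. Since $\eta\by(\bm{p})=\bm{0}$, the correction term has vanishing $2$-jet at $\bm{p}$. Consequently $\varphi$ and $\pm\psi/x_0$ have the same value, the same differential and the same Hessian at $\bm{p}$ modulo the positive factor. More precisely, if $\psi(\bm{p})=0$ and $d\psi(\bm{p})=\bm{0}$, then ${\rm Hess}(h\psi)(\bm{p})=h(\bm{p}){\rm Hess}\,\psi(\bm{p})$, so the signs of $\det{\rm Hess}$ coincide. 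This is the point that needs checking carefully, since $\eta$ is only null along the singular set, so only vanishing at $\bm{p}$ itself is available. That suffices because the criteria only involve jets of order at most two at $\bm{p}$.

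Finally, for the independence condition in (2), at $\bm{p}$ we have $\eta\eta\bx=T+\lambda\bx$ with $\lambda(\bm{p})=0$, so $\eta\eta\bx(\bm{p})=T(\bm{p})$ and $d\pi(T(\bm{p}))=\eta\eta\by(\bm{p})$. Hence $\xi\bx(\bm{p}),\eta\eta\bx(\bm{p})$ are linearly independent if and only if $\xi\by(\bm{p}),\eta\eta\by(\bm{p})$ are. Transferring the Euclidean criteria via these identities yields (1), (2) and (3).
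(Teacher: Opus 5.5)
Your argument is correct, and it does more than the paper does. The paper gives no proof of this lemma: it quotes the Euclidean criteria of \cite{W1943,S2010}, rewrites the identifier as the $4\times 4$ determinant $\det(\bx,\xi\bx,\eta\bx,\eta\eta\bx)$, and leaves the passage from $\R^3$ to $H^3$ implicit. (The evaluation point ``$(u_0,v_0)$'' in the paper's definition of $\varphi$ should also read $(u,v)$, as you tacitly assume.) Your route supplies exactly this missing bridge.

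\begin{itemize}
\item In the chart $(x_0,\by)\mapsto\by$, the column reduction $\det(\bx,\bm w_1,\bm w_2,\bm w_3)=x_0^{-1}\det(\bm w_1',\bm w_2',\bm w_3')$ for tangent $\bm w_i$ holds, and the sign is in fact $+$.
\item Combined with the splitting $\eta\eta\bx=T+\langle\eta\bx,\eta\bx\rangle\bx$, it gives $\varphi=x_0^{-1}\psi+R$, with $R$ in the cube of the maximal ideal $\mathfrak{m}_{\bm p}$ of function germs at $\bm p$.
\item Hence the differentials of $\varphi$ and $\psi$ at $\bm p$ are proportional. When they vanish, the Hessians differ by the factor $x_0(\bm p)^{-1}$, so $\det{\rm Hess}$ has the same sign.
\item Finally, $\langle\eta\bx,\eta\bx\rangle(\bm p)=0$ is what transfers the independence condition in (2).
\end{itemize}

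The paper's citation-only approach is shorter, but it asks the reader to accept that the Euclidean identifier can be swapped for its hyperbolic analogue. Your version makes this rigorous. It also explains why the extra column $\bx$ is the right normalization: it discards the normal component of $\eta\eta\bx$, leaving a discrepancy only in $\mathfrak{m}_{\bm p}^3$, which the criteria cannot detect since they depend only on the $2$-jet of the identifier at $\bm p$. And because you use the same fields $\xi,\eta$ for $\bx$ and $\pi\circ\bx$, your argument does not depend on the precise form in which the Euclidean criteria are stated.
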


\begin{proposition}\label{prop2.18}
Let   $(\bx,\bnu_1,\bnu_2): U \rightarrow H^3 \times \Delta_5$  be  a   hyperbolic generalized  framed surface with   basic invariants $a_i,b_i,c_i,e_i,f_i,g_i$, $i=1,2$. If $\bm{p}=(u_0,v_0)$ is a singular point of $\bx$,  that is, $\alpha(u_0,v_0)=\det\begin{pmatrix}
\bb(u_0,v_0) ~\bc(u_0,v_0) 
\end{pmatrix}=0$ and $\beta(u_0,v_0)=\det\begin{pmatrix}
\bc(u_0,v_0) ~\ba(u_0,v_0) 
\end{pmatrix}=0$. Then,
\begin{itemize}
\item[\rm(1)] $\bm{p}$  is a cross cap singular point of $\bx$  if and only if $(a_1,a_2,b_1,b_2)(u_0,v_0)=(0,0,0,0)$,\\ $(c_1,c_2)(u_0,v_0)\neq(0,0)$ and $ \left(\det\begin{pmatrix}
	\bb_u  ~\bc 
\end{pmatrix}\det\begin{pmatrix}
	\ba_v  ~\bc  
\end{pmatrix}-\det\begin{pmatrix}
	\bb_v  ~\bc 
\end{pmatrix}\det\begin{pmatrix}
	\ba_u  ~\bc  
\end{pmatrix}\right)(u_0,v_0)\neq0 $.
	\item[\rm(2)] $\bm{p}$  is a $S_1^+$ singular point of $\bx$   if and only if $(a_1,a_2,b_1,b_2)(u_0,v_0)=(0,0,0,0)$, $(c_1,c_2)(u_0,v_0)\neq(0,0)$, $ \left(\det\begin{pmatrix}
		\bb_u  ~\bc 
	\end{pmatrix}\det\begin{pmatrix}
		\ba_v  ~\bc  
	\end{pmatrix}-\det\begin{pmatrix}
		\bb_v  ~\bc 
	\end{pmatrix}\det\begin{pmatrix}
		\ba_u  ~\bc  
	\end{pmatrix}\right)(u_0,v_0)=0 $, $\det {\rm Hess} \;\varphi(u_0,v_0)<0$   and  $ \left(-c_1\det\begin{pmatrix}
	\ba_v  ~\bc  
\end{pmatrix}+c_2\det\begin{pmatrix}
\ba_u  ~\bc  
\end{pmatrix},c_2\det\begin{pmatrix}
\bb_u  ~\bc 
\end{pmatrix}-c_1\det\begin{pmatrix}
\bb_v  ~\bc 
\end{pmatrix}\right)(u_0,v_0)\neq(0,0) $. 
\item[\rm(3)] $\bm{p}$  is a $S_1^-$ singular point of $\bx$   if and only if $(a_1,a_2,b_1,b_2)(u_0,v_0)=(0,0,0,0)$, $(c_1,c_2)(u_0,v_0)\neq(0,0)$, $ \left(\det\begin{pmatrix}
	\bb_u  ~\bc 
\end{pmatrix}\det\begin{pmatrix}
	\ba_v  ~\bc  
\end{pmatrix}-\det\begin{pmatrix}
	\bb_v  ~\bc 
\end{pmatrix}\det\begin{pmatrix}
	\ba_u  ~\bc  
\end{pmatrix}\right)(u_0,v_0)=0 $ and $ \det {\rm Hess} \;\varphi(u_0,v_0)>0$.
	\end{itemize}
\begin{proof}
If $(a_1,a_2,b_1,b_2)(u_0,v_0)\neq(0,0,0,0)$, it may be assumed that $a_1(u_0,v_0)\neq0$. Then $a_1(u,v)\neq0$ at least locally around $\bm{p}$. There will exist a smooth function $\theta:U\rightarrow\R$ such that $$\cos\theta(u,v)=\frac{b_1(u,v)}{\sqrt{a_1^2(u,v)+b_1^2(u,v)}},\;\sin\theta(u,v)=\frac{a_1(u,v)}{\sqrt{a_1^2(u,v)+b_1^2(u,v)}}.$$ By Proposition \ref{th11}, we can obtain that $\bx$ is a hyperbolic framed base surface at least locally around $\bm{p}$, in which case $\bx$ is not a cross cap or $S^{\pm}_1$ singular point at $\bm{p}$.   Since cross cap and $S^{\pm}_1$ singular points   are corank one, then $  (\bx_u,\bx_v)(u_0,v_0)= (c_1\bnu_3,c_2\bnu_3)(u_0,v_0)\neq( \bm{0}, \bm{0}) ,$  that is, $(c_1,c_2)(u_0,v_0)\neq(0,0)$.
It follows that $(c_1(u,v),c_2(u,v))\neq(0,0)$ at least locally around $\bm{p}$. Since $$c_2(u,v)\bx_u(u,v)-c_1(u,v)\bx_v(u,v)=-\beta(u,v)\bnu_1(u,v)+\alpha(u,v)\bnu_2(u,v),$$
we can take the null vector field  $\eta(u,v)=c_2(u,v)\partial/\partial u-c_1(u,v)\partial/\partial v$. A vector field transverse to $\eta$ can be given by $\xi(u,v)=c_1(u,v)\partial/\partial u+c_2(u,v)\partial/\partial v.$ 	By  calculating, we obtain 
\begin{equation}\notag
	\begin{aligned}
		\xi\bx(u,v) 
		&=(a_1c_1+a_2c_2)(u,v) \bnu_1(u,v) +(b_1c_1+b_2c_2)(u,v) \bnu_2(u,v)  +(c_1^2+c_2^2)(u,v) \bnu_3(u,v) ,\\
		\eta\eta\bx(u,v)  
		&=(\alpha^2+\beta^2)(u,v)\bx(u,v)+((c_1\beta_v-c_2\beta_u)+\alpha(c_1e_2-c_2e_1))(u,v) \bnu_1(u,v) \\&~~~+((c_2\alpha_u-c_1\alpha_v)+\beta(c_1e_2-c_2e_1))(u,v) \bnu_2(u,v) \\&~~~+(\beta(c_1f_2-c_2f_1)+\alpha(c_2g_1-c_1g_2))(u,v)\bnu_3(u,v).
	\end{aligned}
\end{equation}
Denote \begin{equation}\notag
\begin{aligned}
	\varphi(u,v)& =\det(\bx,\xi\bx, \eta\bx,\eta\eta\bx)(u ,v ) \\&=\det\begin{pmatrix}
		a_1c_1+a_2c_2&-\beta& c_1\beta_v-c_2\beta_u +\alpha(c_1e_2-c_2e_1)\\
		b_1 c_1+b_2c_2&\alpha& c_2\alpha_u-c_1\alpha_v +\beta(c_1e_2-c_2e_1)\\c_1^2+c_2^2&0&\beta(c_1f_2-c_2f_1)+\alpha(c_2g_1-c_1g_2)
	\end{pmatrix} (u,v). 	\end{aligned}	\end{equation}
Then \begin{equation}\notag
	\begin{aligned}
		\varphi_u(u_0,v_0)& =(c_1(c_1^2+c_2^2)(\alpha_v\beta_u-\alpha_u\beta_v))(u_0,v_0),\\
		\varphi_v(u_0,v_0)& =(c_2(c_1^2+c_2^2)(\alpha_v\beta_u-\alpha_u\beta_v))(u_0,v_0).
	\end{aligned}
\end{equation}
It follows that $d\varphi(u_0,v_0)=\bm{0}$ if and only if $$\begin{aligned}
(\alpha_v\beta_u-\alpha_u\beta_v)(u_0,v_0)= \left(\det\begin{pmatrix}
\bb_u  ~\bc 
\end{pmatrix}\det\begin{pmatrix}
\ba_v  ~\bc  
\end{pmatrix}-\det\begin{pmatrix}
\bb_v  ~\bc 
\end{pmatrix}\det\begin{pmatrix}
\ba_u  ~\bc  
\end{pmatrix}\right)(u_0,v_0)=0.	\end{aligned}$$ 

By calculation, we can further obtain
\begin{equation}\notag
	\begin{aligned}
		\varphi_{uu}(u_0,v_0)&=((c_1^2+c_2^2)(2c_1(\beta_u\alpha_{uv}-\alpha_u\beta_{uv})+c_1(\beta_{uu}\alpha_v-\alpha_{uu}\beta_v)+c_2(\alpha_u\beta_{uu}-\beta_u\alpha_{uu})\\&~~~~-2(c_1e_2-c_2e_1)(\alpha_u^2+\beta_u^2)))(u_0,v_0),\\
		\varphi_{vv}(u_0,v_0)&=((c_1^2+c_2^2)(2c_1(\alpha_v\beta_{uv}-\beta_v\alpha_{uv})+c_1(\beta_{v}\alpha_{vv}-\alpha_{v}\beta_{vv})+c_2(\alpha_{vv}\beta_{u}-\alpha_u\beta_{vv})\\&~~~~-2(c_1e_2-c_2e_1)(\alpha_v^2+\beta_v^2)))(u_0,v_0),\\
		\varphi_{uv}(u_0,v_0)&=((c_1^2+c_2^2)(c_1(\alpha_{vv}\beta_u-\alpha_u\beta_{vv})+c_2(\alpha_{ v}\beta_{uu}-\alpha_{uu}\beta_{ v})\\&~~~~-2(c_1e_2-c_2e_1)(\alpha_u\alpha_v +\beta_u\beta_v )))(u_0,v_0) 
	\end{aligned}
\end{equation}
and \begin{equation} \notag
	\det {\rm Hess} \;\varphi (u_0,v_0)=(\varphi_{uu}\varphi_{vv}-\varphi_{uv}^2)(u_0,v_0).	 
\end{equation}
Since $  \xi\bx(u_0,v_0) 
=   (c_1^2+c_2^2)(u_0,v_0) \bnu_3(u_0,v_0) $ and 
$$\begin{aligned}
\eta\eta\bx(u_0,v_0)  
= &\left(-c_1\det\begin{pmatrix}
	\ba_v  ~\bc  
\end{pmatrix}+c_2\det\begin{pmatrix}
\ba_u  ~\bc  
\end{pmatrix}\right) (u_0,v_0) \bnu_1(u_0,v_0) \\ &+\left(c_2\det\begin{pmatrix}
\bb_u  ~\bc 
\end{pmatrix}-c_1\det\begin{pmatrix}
\bb_v  ~\bc 
\end{pmatrix}\right) (u_0,v_0) \bnu_2(u_0,v_0) , 	\end{aligned} $$ $ \xi\bx(u_0,v_0) $  and $ \eta\eta\bx(u_0,v_0)$ are linearly independent at $\bm{p}=(u_0,v_0)$ if and only if $$\left(-c_1\det\begin{pmatrix}
\ba_v  ~\bc  
\end{pmatrix}+c_2\det\begin{pmatrix}
\ba_u  ~\bc  
\end{pmatrix},c_2\det\begin{pmatrix}
\bb_u  ~\bc 
\end{pmatrix}-c_1\det\begin{pmatrix}
\bb_v  ~\bc 
\end{pmatrix}\right)(u_0,v_0)\neq(0,0).$$  Using the criteria for cross cap and $S_1^{\pm}$  singular points in Lemma \ref{lemma2.17}, we   have the assertions $(1)$, $(2)$, $(3)$.  			
\end{proof}
\end{proposition}
\subsection{Examples}
 Several examples are provided as applications of hyperbolic generalized framed surfaces.
\begin{example}[Corank one singularities]
	Let $\bx: (\R^2,(0,0))\rightarrow (H^3,\bx(0,0))$ be given by $$\bx(u,v)=\left(\sqrt{u^2+f^2(u,v)+g^2(u,v)+1},  u, f(u,v),  g(u,v)\right),$$ where $f,g:U\rightarrow\R$ are smooth functions with $f_v(0,0)=g_v(0,0)=0$.
	By a direct calculation, we have 
	\begin{equation}\notag
		\begin{aligned}
			(\bx\wedge\bx_u\wedge\bx_v)(u,v) = \bigg( &(g_vf-f_vg)(u,v)+u(f_vg_u- f_ug_v)(u,v),\\
			&\frac{(1+u^2)(f_v g_u -f_u g_v)(u,v)+u(g_v f - f_vg)(u,v) }{\sqrt{u^2+(f^2 +g^2)(u,v) +1}},\\
			&\frac{(g_v +  g_vf^2-f_vgf)(u,v) +u(ff_v g_u -f f_u g_v)(u,v)}{\sqrt{u^2+(f^2 +g^2)(u,v) +1}},\\
			&\frac{(-f_v +  g_vfg-f_vg^2)(u,v) +u(gf_v g_u -g f_u g_v)(u,v)}{\sqrt{u^2+(f^2 +g^2)(u,v) +1}} 
			\bigg).
		\end{aligned}
	\end{equation}
	There exist unit spacelike vectors 
	\begin{equation}\notag
		\bnu_1(u,v)= \frac{\sqrt{(g(u,v)-ug_u(u,v))^2+g^2_u(u,v)+1}}{\sqrt{p(u,v)}}(\overline{\bnu}_1-k \overline{\bnu}_2)(u,v)
	\end{equation}and
	\begin{equation}\notag
		\bnu_2(u,v)=\frac{(ug_u(u,v)-g(u,v))\bx(u,v)+(0,g_u(u,v),0,-1)}{\sqrt{(g(u,v)-ug_u(u,v))^2+g^2_u(u,v)+1}}
	\end{equation}
	such that $\langle\bx,\bnu_1\rangle(u,v)=\langle\bx,\bnu_2\rangle(u,v)=\langle\bnu_1,\bnu_2\rangle(u,v)=0$ and  $$\begin{aligned}
		&(\bx\wedge\bx_u\wedge\bx_v)(u,v)\\=& ~\frac{  g_v(u,v)\sqrt{p(u,v)}}{\sqrt{u^2+(f^2 +g^2)(u,v) +1}\sqrt{(g(u,v)-ug_u(u,v))^2+g^2_u(u,v)+1}}\bnu_1(u,v)\\&+\frac{ q(u,v)}{\sqrt{u^2+(f^2 +g^2)(u,v) +1}\sqrt{(g(u,v)-ug_u(u,v))^2+g^2_u(u,v)+1}}\bnu_2(u,v), \end{aligned}$$
	where \begin{equation}\notag
		\begin{aligned}
			\overline{\bnu}_1(u,v)&=\Big((f(u,v)-uf_u(u,v))\sqrt{u^2+(f^2 +g^2)(u,v) +1},uf(u,v)-(1+u^2)f_u(u,v),\\&
			~~~~~1+f^2(u,v)-uf(u,v)f_u(u,v),f(u,v)g(u,v)-ug(u,v)f_u(u,v)\Big),\\
			\overline{\bnu}_2(u,v)	&=(ug_u(u,v)-g(u,v))\bx(u,v)+(0,g_u(u,v),0,-1),\\
			k(u,v)	&=\frac{-(gf+g_uf_u)(u,v)+u(g_uf+gf_u)(u,v)-u^2f_u(u,v)g_u(u,v)}{ {(g(u,v)-ug_u(u,v))^2+g^2_u(u,v)+1}},\\
			p(u,v)&=(f(u,v)-uf_u(u,v))^2+(g(u,v)-ug_u(u,v))^2+((f_ug-g_uf)^2+f^2_u+g_u^2)(u,v)+1,\\
			q(u,v)&=g_v(u,v)(-(gf+g_uf_u)(u,v)+u(g_uf+gf_u)(u,v)-u^2f_u(u,v)g_u(u,v))\\&~~+f_v(u,v)((g(u,v)-ug_u(u,v))^2+g^2_u(u,v)+1).		
		\end{aligned}
	\end{equation}
	Thus, $(\bx,\bnu_1,\bnu_2)$ is a hyperbolic generalized framed surface with 
	$$\begin{aligned}
		\alpha(u,v)&=\frac{  g_v(u,v)\sqrt{p(u,v)}}{\sqrt{u^2+(f^2 +g^2)(u,v) +1}\sqrt{(g(u,v)-ug_u(u,v))^2+g^2_u(u,v)+1}},\\ \beta(u,v)&=\frac{ q(u,v)}{\sqrt{u^2+(f^2 +g^2)(u,v) +1}\sqrt{(g(u,v)-ug_u(u,v))^2+g^2_u(u,v)+1}}.\end{aligned}$$
	
\end{example}
\begin{example}[A cross cap]\label{ex3}
	Let $\bx: (\R^2,(0,0))\rightarrow (H^3,\bx(0,0))$ be given by $$\bx(u,v)=\left(\sqrt{u^2+v^4+u^2v^2+1},  u, v^2,  uv\right).$$
	By a direct calculation, we have 
	\begin{equation}\notag
		(\bx\wedge\bx_u\wedge\bx_v)(u,v)=\frac{\left(uv^2\sqrt{u^2+v^4+u^2v^2+1},v^2 (u^2+2),u(v^4+1),u^2 v^3-2v\right)}{\sqrt{u^2+v^4+u^2v^2+1}}.
	\end{equation}
	Denote \begin{equation}\notag
		\bnu(u,v)=\left(  \frac{u\sqrt{v^2+1}}{ \sqrt{v^4+1}}, \frac{ \sqrt{u^2+v^4+u^2v^2+1}}{\sqrt{v^2+1}\sqrt{v^4+1}},0, \frac{v\sqrt{u^2+v^4+u^2v^2+1}}{\sqrt{v^2+1}\sqrt{v^4+1}}\right).
	\end{equation}
	Then we obtain $\langle\bx,\bnu\rangle(u,v)=\langle\bx\wedge\bx_u\wedge\bx_v,\bnu\rangle(u,v)=0$. By Theorem  \ref{th7}, we conclude that $\bx$ is a hyperbolic generalized framed base surface.
	Furthermore, there exist unit spacelike vectors
	$$\bnu_1(u,v)=\left( \frac{v^2\sqrt{u^2+v^4+u^2v^2+1}}{\sqrt{v^4+1}},\frac{u  v^2}{\sqrt{v^4+1}},\sqrt{v^4+1},\frac{u  v^3}{\sqrt{v^4+1}}\right)$$ and $$\bnu_2(u,v)=\left(0,\frac{  v }{\sqrt{v^2+1}},0,-\frac{ 1 }{\sqrt{v^2+1}}\right)$$ such that $\langle\bx,\bnu_1\rangle(u,v)=\langle\bx,\bnu_2\rangle(u,v)=\langle\bnu_1,\bnu_2\rangle(u,v)=0$ and  $$(\bx\wedge\bx_u\wedge\bx_v)(u,v)= \frac{ u\sqrt{v^4+1}}{\sqrt{u^2+v^4+u^2v^2+1}}\bnu_1(u,v)+\frac{ 2v\sqrt{v^2+1}}{\sqrt{u^2+v^4+u^2v^2+1}}\bnu_2(u,v). $$ Thus, $(\bx,\bnu_1,\bnu_2)$ is a hyperbolic generalized framed surface with 
	$$\alpha(u,v)=\frac{ u\sqrt{v^4+1}}{\sqrt{u^2+v^4+u^2v^2+1}},\; \beta(u,v)=\frac{ 2v\sqrt{v^2+1}}{\sqrt{u^2+v^4+u^2v^2+1}}$$
	and the  basic invariants 
	\begin{equation}\notag
		\begin{aligned}
		\begin{pmatrix}
			a_1(u,v)&b_1(u,v)&c_1(u,v)\\
			a_2(u,v)&b_2(u,v)&c_2(u,v)
		\end{pmatrix}&= 
		\begin{pmatrix}
			0&0&\frac{\sqrt{v^4+1}\sqrt{v^2+1} }{\sqrt{u^2+v^4+u^2v^2+1}}\\
			\frac{2v}{\sqrt{v^4+1}}&  -\frac{u}{\sqrt{v^2+1}}&\frac{ uv(1-v^4-2v^2) }{\sqrt{u^2+v^4+u^2v^2+1}\sqrt{v^4+1}\sqrt{v^2+1}}
		\end{pmatrix}, \\
		\begin{pmatrix}
			e_1(u,v)&f_1(u,v)&g_1(u,v)\\
			e_2(u,v)&f_2(u,v)&g_2(u,v)
		\end{pmatrix}&= 
		\begin{pmatrix}
			0& \frac{v^2\sqrt{v^2+1}}{\sqrt{u^2+v^4+u^2v^2+1}}&0\\
			-\frac{uv^2}{\sqrt{v^4+1}\sqrt{v^2+1}}&\frac{ uv^3(1-v^4-2v^2) }{(v^4+1)\sqrt{u^2+v^4+u^2v^2+1} \sqrt{v^2+1}}& \frac{\sqrt{u^2+v^4+u^2v^2+1}}{(v^2+1)\sqrt{v^4+1}}
		\end{pmatrix}.
			\end{aligned}
	\end{equation}
\end{example}
\begin{example}[A hyperbolic ruled surface] 
	Let $\bgamma:(-\pi,\pi]\rightarrow H^3$,  $$\bgamma(u)=\left(\frac{13}{5}, \frac{9\cos u-3\cos 3u}{5}, \frac{9\sin u-3\sin 3u}{5},\frac{6\sqrt{3}\cos u}{5}\right)$$ be a smooth curve in $H^3$
	and $\bm{\delta}:(-\pi,\pi]\rightarrow S_1^3$,
	$$\bm{\delta}(u)=\frac{\left(-156\sin u, -97\sin 2u+18\sin 4u,-50\sin^2u-144\sin^4u+25,-36\sqrt{3}\sin 2u \right)}{5\sqrt{144\sin^2u+25}}.$$
	Then  \begin{equation}\notag
		\begin{aligned}
			\bx(u,v)&=\cosh v\bgamma(u)+\sinh v\bm{\delta}(u)\\&= \Bigg( \frac{13\cosh v}{5}-\frac{156\sin u\sinh v}{5\sqrt{144\sin^2u+25}},
			\frac{(9\cos u-3\cos 3u)\cosh v}{5} -\frac{(97\sin 2u-18\sin 4u)\sinh v }{5\sqrt{144\sin^2u+25}},\\
			&~~~~ \frac{(9\sin u-3\sin 3u)\cosh v}{5} -\frac{(50\sin^2u+144\sin^4u-25) \sinh v }{5\sqrt{144\sin^2u+25}}, \\			
			&~~~~\frac{6\sqrt{3}\cos u\cosh v}{5}-\frac{36\sqrt{3}\sin 2u\sinh v }{5\sqrt{144\sin^2u+25}} \Bigg) 
		\end{aligned}
	\end{equation}
	is a hyperbolic ruled surface generated  by $\bgamma$ and $\bm{\delta}$,
	where $(u,v)\in(-\pi,\pi]\times\R$.
	By a direct calculation, we have 
	\begin{equation}\notag
		\begin{aligned}
			(\bx\wedge\bx_u\wedge\bx_v)(u,v)= &~ \frac{-12\sqrt{3}\sin u\cosh v+\sinh v\sqrt{432\sin^2u+75}}{5}  \bnu_1(u,v)\\
			&+\frac{65\sinh v}{\sqrt{144\sin^2u+25}}\bnu_2(u,v),
		\end{aligned}
	\end{equation}
	where
	$$\begin{aligned}\bnu_1(u,v)&=\frac{\left(24\cos u,  13\cos 2u,13\sin 2u ,13\sqrt{3}  \right)}{2\sqrt{144\sin^2u+25}},\\   \bnu_2(u,v)&=\left(0, -\frac{\sqrt{3}}{2}\cos2u,-\frac{\sqrt{3}}{2}\sin 2u, \frac{1}{2}\right)\end{aligned}$$ and $\langle\bx,\bnu_1\rangle(u,v)=\langle\bx,\bnu_2\rangle(u,v)=\langle\bnu_1,\bnu_2\rangle(u,v)=0$.    Thus, $(\bx,\bnu_1,\bnu_2)$ is a hyperbolic generalized framed surface with 
	$$\alpha(u,v)=\frac{-12\sqrt{3}\sin u\cosh v+\sinh v\sqrt{432\sin^2u+75}}{5},\; \beta(u,v)=\frac{65\sinh v}{\sqrt{144\sin^2u+25}}$$
	and the  basic invariants 
	\begin{equation}\notag
		\begin{aligned}
			\begin{pmatrix}
				a_1(u,v)&b_1(u,v)&c_1(u,v)\\
				a_2(u,v)&b_2(u,v)&c_2(u,v)
			\end{pmatrix}&=
			\begin{pmatrix}
				-\frac{65\sinh v}{\sqrt{144\sin^2u+25}}&\frac{-12\sqrt{3}\sin u\cosh v+\sinh v\sqrt{432\sin^2u+75}}{5}&0\\
				0&0&1
			\end{pmatrix},\\
			\begin{pmatrix}
				e_1(u,v)&f_1(u,v)&g_1(u,v)\\
				e_2(u,v)&f_2(u,v)&g_2(u,v)
			\end{pmatrix}&=
			\begin{pmatrix}
				0& \frac{65\cosh v}{144\sin^2u+25}&\frac{12\sqrt{3}\sin u\sinh v-\cosh v\sqrt{432\sin^2u+75}}{5}\\
				0&0&0
			\end{pmatrix}.
		\end{aligned}
	\end{equation}
	Since $\alpha(0,0)=\beta(0,0)=0$,   $(a_1,a_2,b_1,b_2)(0,0)=(0,0,0,0)$, $(c_1,c_2)(0,0)=(0,1)\neq(0,0)$   and $$ 
	\left(\det\begin{pmatrix}
		\bb_u  ~\bc 
	\end{pmatrix}\det\begin{pmatrix}
		\ba_v  ~\bc  
	\end{pmatrix}-\det\begin{pmatrix}
		\bb_v  ~\bc 
	\end{pmatrix}\det\begin{pmatrix}
		\ba_u  ~\bc  
	\end{pmatrix}\right)(0,0) 
	= 12\sqrt{3}\neq0, $$  we can obtain that $\bx$     is $ \mathcal{A} $-equivalent to cross cap at $(0,0)$ by Proposition \ref{prop2.18}. A similar discussion leads to the conclusion that $\bx$     is also $ \mathcal{A} $-equivalent to cross cap at $(\pi,0)$,  see Figure \ref{figure1}. 
	\begin{figure} 
		\centering
		\includegraphics[width = 7.5 cm]{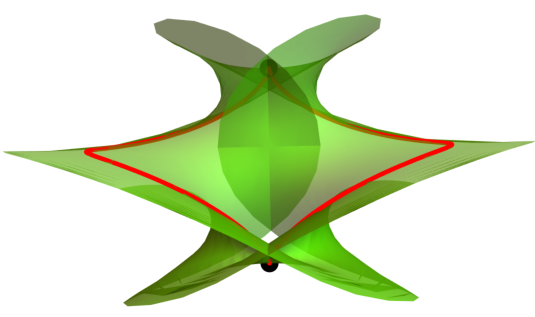}
		\caption{A curve $\bgamma$ and a hyperbolic  ruled surface  $\bx$ projected to Poincar\'{e} 3-disc. The two black points are  cross cap singularities of $\bx$.}
		\label{figure1}
	\end{figure}
	 
\end{example}
\begin{example}[A hyperbolic ruled surface]\label{ex0}
Let $\bgamma:(-\pi,\pi]\rightarrow H^3$,  $$\bgamma(u)=\left(\frac{13}{5},-\frac{3}{5}\cos 3u+\frac{9}{5}\cos u,-\frac{3}{5}\sin 3u+\frac{9}{5}\sin u,\frac{6\sqrt{3}}{5}\cos u\right)$$ be a smooth curve in $H^3$
and $\bm{\delta}:(-\pi,\pi]\rightarrow S_1^3$,
$$\bm{\delta}(u)=\left(0, -\frac{\sqrt{3}}{2}\cos2u,-\frac{\sqrt{3}}{2}\sin 2u, \frac{1}{2}\right).$$
Then  \begin{equation}\notag
\begin{aligned}
\bx(u,v)&=\cosh v\bgamma(u)+\sinh v\bm{\delta}(u)\\&= \Bigg( \frac{13}{5}\cosh v,\left(-\frac{3}{5}\cos 3u+\frac{9}{5}\cos u\right)\cosh v-\frac{\sqrt{3}}{2}\cos 2u\sinh v,\\
&~~~\left(-\frac{3}{5}\sin 3u+\frac{9}{5}\sin u\right)\cosh v-\frac{\sqrt{3}}{2}\sin 2u\sinh v, 
\frac{6\sqrt{3}}{5}\cos u\cosh v+\frac{1}{2}\sinh v \Bigg) 
\end{aligned}
\end{equation}
is a hyperbolic ruled surface generated  by $\bgamma$ and $\bm{\delta}$,
where $(u,v)\in(-\pi,\pi]\times\R$.
By a direct calculation, we have 
\begin{equation}\notag
(\bx\wedge\bx_u\wedge\bx_v)(u,v)=\left( \frac{12\sqrt{3}\cos u\sinh v}{5},\frac{13\sqrt{3}\cos 2u\sinh v}{10},\frac{13\sqrt{3}\sin 2u\sinh v}{10},\frac{39\sinh v}{10}\right).
\end{equation}
	\begin{figure} 
	\centering
	\includegraphics[width = 5.5 cm]{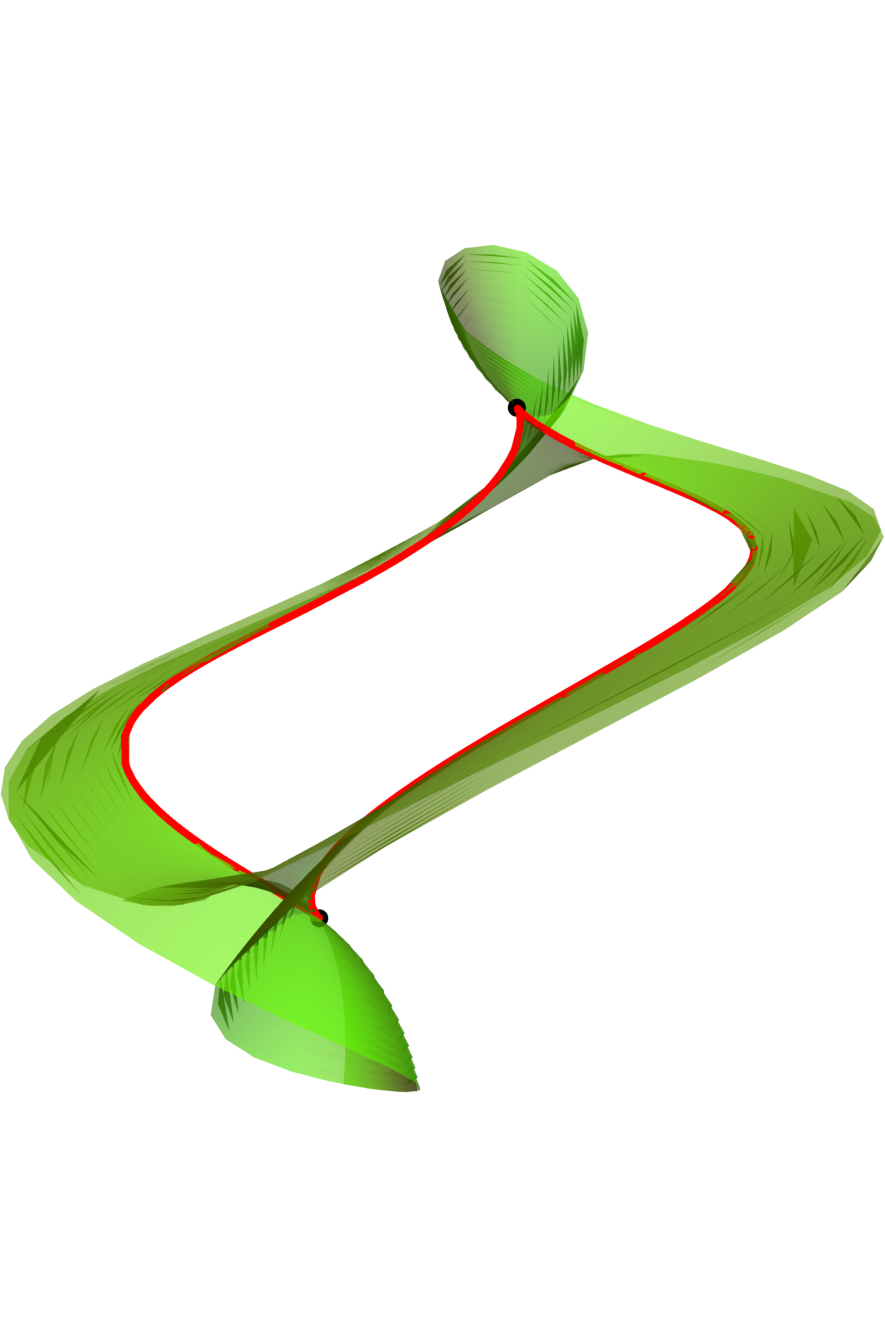}
	\caption{A curve $\bgamma$ and a hyperbolic  ruled surface  $\bx$ projected to Poincar\'{e} 3-disc. The two black points are  swallowtail singularities of $\bx$. The other points on  $\bgamma$ are  cuspidal edge singularities of $\bx$.}
	\label{figure3}
\end{figure}
Denote
$$ \bnu_1(u,v)=\frac{\left(24\cos u,  13\cos 2u,13\sin 2u ,13\sqrt{3}  \right)}{2\sqrt{144\sin^2u+25}} $$
  and  $$ 
  \bnu_2(u,v)= \frac{\left(-156\sin u, -97\sin 2u+18\sin 4u,-50\sin^2u-144\sin^4u+25,-36\sqrt{3}\sin 2u \right)}{5\sqrt{144\sin^2u+25}}. $$  Then we get $\langle\bx,\bnu_1\rangle(u,v)=\langle\bx,\bnu_2\rangle(u,v)=\langle\bnu_1,\bnu_2\rangle(u,v)=0$ and  $$(\bx\wedge\bx_u\wedge\bx_v)(u,v)= \frac{\sqrt{432\sin^2u+75}\sinh v}{5}\bnu_1(u,v) .$$  Thus, $(\bx,\bnu_1,\bnu_2)$ is a hyperbolic generalized framed surface with 
$$\alpha(u,v)=\frac{\sqrt{432\sin^2u+75}\sinh v}{5},\; \beta(u,v)=0$$
and the  basic invariants 
\begin{equation}\notag
	\begin{aligned}
\begin{pmatrix}
a_1(u,v)&b_1(u,v)&c_1(u,v)\\
a_2(u,v)&b_2(u,v)&c_2(u,v)
\end{pmatrix}&=
\begin{pmatrix}
0&-\frac{\sqrt{432\sin^2u+75}\sinh v}{5}&\frac{12\sqrt{3}\sin u}{5} \\
0&0&-1
\end{pmatrix},\\
\begin{pmatrix}
e_1(u,v)&f_1(u,v)&g_1(u,v)\\
e_2(u,v)&f_2(u,v)&g_2(u,v)
\end{pmatrix}&=
\begin{pmatrix}
 \frac{65}{144\sin^2u+25}&0 &-\frac{\sqrt{432\sin^2u+75}\cosh v}{5}\\
0&0&0
\end{pmatrix}.
	\end{aligned}
\end{equation}
By Propositions \ref{prop5} and \ref{prop6}, $(\bx,\bnu_1,\bnu_2)$ is both a hyperbolic framed surface and a one-parameter family of hyperbolic framed curves with respect to $v$.

We have investigated that the singular points of $\bx$ lie on $\bgamma$ in \cite{ZP}. On the singular points of $\bgamma$, $\bx$ has swallowtail singularities, while on the other points, $\bx$ has cuspidal edge singularities, see Figure \ref{figure3}.
\end{example}

\section{Relations between hyperbolic generalized framed surfaces in $H^3$,  generalized framed surfaces in $\R^3$ and lightcone framed surfaces in $\R_1^3$}\label{S4} 
\subsection{Relations between hyperbolic generalized framed surfaces  and  generalized framed surfaces}
Poincar\'{e} 3-disc  $D^3=\left\{(x_1,x_2,x_3)\; | \;x_1^2+x_2^2+x_3^2<1\right\}$ is an open subset of Euclidean  space. It has been known that there is the diffeomorphism $\bm{\pi}$ from $H^3$ to $D^3$,
\begin{equation}\notag
\bm{\pi}(x_1,x_2,x_3,x_4)=\left(\frac{x_2}{x_1+1},\frac{x_3}{x_1+1},\frac{x_4}{x_1+1}\right).	 
\end{equation}
The inverse mapping of $\bm{\pi}$ is 
\begin{equation}\notag
\bm{\pi}^{-1}
(x_1,x_2,x_3)=\frac{({1+x_1^2+x_2^2+x_3^2},{2x_1},{2x_2},{2x_3})}{1-x_1^2-x_2^2-x_3^2}.	 
\end{equation}

If we regard $D^3$ as a subset of $\R^3$ with induced metric, we can obtain the relations between hyperbolic generalized framed surfaces  and generalized  framed surfaces by the diffeomorphism between $H^3$ and $D^3$.
\begin{proposition}\label{th4.1}
Let $(\bx, \bnu_1, \bnu_2) :U\rightarrow H^3 \times \Delta_5$ be a hyperbolic generalized framed surface. Denote $\bx(u,v)=(x_1,x_2,x_3,x_4)(u,v)$, $\bnu_1(u,v)=(y_1,y_2,y_3,y_4)(u,v)$ and $\bnu_2(u,v)=(z_1,z_2,z_3,z_4)(u,v)$. Then   $(\overline{\bx}, \overline{\bnu}_1, \overline{\bnu}_2): U \rightarrow D^3 \times \Delta$  is a generalized framed surface, where
\begin{equation}\notag
\begin{aligned}
\overline{\bx}(u,v)&=\bigg(\frac{x_2}{x_1+1},\frac{x_3}{x_1+1},\frac{x_4}{x_1+1}\bigg)(u,v),\\ 
\overline{\bnu}_1(u,v)&=\dfrac{(p_1,q_1,r_1)}{\sqrt{p_1^2+q_1^2+r_1^2}}(u,v), 
&&\overline{\bnu}_2(u,v)=\dfrac{(p_2,q_2,r_2)}{\sqrt{p_2^2+q_2^2+r_2^2}}(u,v), \\
p_1(u,v)&=(x_2y_1-x_1y_2-y_2)(u,v),
&&q_1(u,v)=(x_3y_1-x_1y_3-y_3)(u,v),\\ 
r_1(u,v)&=(x_4y_1-x_1y_4-y_4)(u,v), 
&&p_2(u,v)=(x_2z_1-x_1z_2-z_2)(u,v),\\ 
q_2(u,v)&=(x_3z_1-x_1z_3-z_3)(u,v), 
&&r_2(u,v)=(x_4z_1-x_1z_4-z_4)(u,v). 
\end{aligned}
\end{equation}
\begin{proof}
Since $(\bx, \bnu_1, \bnu_2) :U\rightarrow H^3 \times \Delta_5$ is a hyperbolic generalized framed surface, there exist smooth functions $\alpha,\beta:U\rightarrow\R$ such that
\begin{equation}\notag
\begin{aligned}
(\bx\wedge\bx_u\wedge\bx_v)(u,v) 
=\big(&-x_2(x_{3u}x_{4v}-x_{4u}x_{3v})+x_3(x_{2u}x_{4v}-x_{4u}x_{2v})-x_4(x_{2u}x_{3v}-x_{3u}x_{2v}),\\
&-x_1(x_{3u}x_{4v}-x_{4u}x_{3v})+x_3(x_{1u}x_{4v}-x_{4u}x_{1v})-x_4(x_{1u}x_{3v}-x_{1v}x_{3u}),\\
&-x_1(x_{2v}x_{4u}-x_{4v}x_{2u})+x_2(x_{1v}x_{4u}-x_{1u}x_{4v})-x_4(x_{1v}x_{2u}-x_{1u}x_{2v}),\\
&-x_1(x_{2u}x_{3v}-x_{3u}x_{2v})+x_2(x_{1u}x_{3v}-x_{1v}x_{3u})-x_3(x_{1u}x_{2v}-x_{1v}x_{2u})\big)(u,v) \\
=(&\alpha y_1+\beta z_1,\alpha y_2+\beta z_2,\alpha y_3+\beta z_3,\alpha y_4+\beta z_4)(u,v).
\end{aligned}
\end{equation}
Then we have
\begin{equation}\label{eq9}
\left\{
\begin{aligned}
&(-x_2(x_{3u}x_{4v}-x_{4u}x_{3v})+x_3(x_{2u}x_{4v}-x_{4u}x_{2v})-x_4(x_{2u}x_{3v}-x_{3u}x_{2v}))(u,v)=(\alpha y_1+\beta z_1)(u,v),\\
&(-x_1(x_{3u}x_{4v}-x_{4u}x_{3v})+x_3(x_{1u}x_{4v}-x_{4u}x_{1v})-x_4(x_{1u}x_{3v}-x_{1v}x_{3u}))(u,v)=(\alpha y_2+\beta z_2)(u,v),\\
&(-x_1(x_{2v}x_{4u}-x_{4v}x_{2u})+x_2(x_{1v}x_{4u}-x_{1u}x_{4v})-x_4(x_{1v}x_{2u}-x_{1u}x_{2v}))(u,v)=(\alpha y_3+\beta z_3)(u,v),\\
&(-x_1(x_{2u}x_{3v}-x_{3u}x_{2v})+x_2(x_{1u}x_{3v}-x_{1v}x_{3u})-x_3(x_{1u}x_{2v}-x_{1v}x_{2u}))(u,v)=(\alpha y_4+\beta z_4)(u,v).
\end{aligned}
\right.
\end{equation}
Let $$ \overline{\bx}(u,v)=\bm{\pi}\circ\bx(u,v)=\bigg(\frac{x_2}{x_1+1},\frac{x_3}{x_1+1},\frac{x_4}{x_1+1}\bigg)(u,v). $$
By calculating, 
\begin{equation}\notag
\begin{aligned}
&(\overline{\bx}_u\times\overline{\bx}_v)(u,v)\\
=&~\frac{1}{(x_1+1)^3}(u,v)\Big( x_2(\alpha y_1+\beta z_1)-x_1(\alpha y_2+\beta z_2)-(\alpha y_2+\beta z_2),\\
& ~~~~~~~~~~~~~~~~~~~~~x_3(\alpha y_1+\beta z_1)-x_1(\alpha y_3+\beta z_3)-(\alpha y_3+\beta z_3),\\
&~~~~~~~~~~~~~~~~~~~~~ x_4(\alpha y_1+\beta z_1)-x_1(\alpha y_4+\beta z_4)-(\alpha y_4+\beta z_4)\Big)(u,v)\\
=&~\frac{1}{(x_1+1)^3}(u,v)\left(\alpha(p_1,q_1,r_1)+\beta(p_2,q_2,r_2) \right)(u,v)\\
=&~\frac{\alpha\sqrt{p_1^2+q_1^2+r_1^2}}{(x_1+1)^3} (u,v)\dfrac{(p_1,q_1,r_1)}{\sqrt{p_1^2+q_1^2+r_1^2}}(u,v)+\frac{\beta\sqrt{p_2^2+q_2^2+r_2^2}}{(x_1+1)^3}(u,v)\dfrac{(p_2,q_2,r_2)}{\sqrt{p_2^2+q_2^2+r_2^2}}(u,v),
\end{aligned}
\end{equation}
where \begin{equation}\notag
\begin{aligned}
&p_1(u,v)=(x_2y_1-x_1y_2-y_2)(u,v),
&&q_1(u,v)=(x_3y_1-x_1y_3-y_3)(u,v),\\ 
&r_1(u,v)=(x_4y_1-x_1y_4-y_4)(u,v), 
&&p_2(u,v)=(x_2z_1-x_1z_2-z_2)(u,v),\\ 
&q_2(u,v)=(x_3z_1-x_1z_3-z_3)(u,v), 
&&r_2(u,v)=(x_4z_1-x_1z_4-z_4)(u,v). 
\end{aligned}
\end{equation}
Denote $$\overline{\bnu}_1(u,v)=\dfrac{(p_1,q_1,r_1)}{\sqrt{p_1^2+q_1^2+r_1^2}}(u,v),\; \overline{\bnu}_2(u,v)=\dfrac{(p_2,q_2,r_2)}{\sqrt{p_2^2+q_2^2+r_2^2}}(u,v).$$ 
Next we show that $\overline{\bnu}_1,\overline{\bnu}_2:U\rightarrow S^2$   are well-defined. Assume that there exists $(u_0,v_0)\in U$ such that
$(p_1,q_1,r_1)(u_0,v_0)=(0,0,0)$, we have
\begin{equation}\notag
y_2(u_0,v_0)=\frac{x_2y_1}{x_1+1}(u_0,v_0),\;
y_3(u_0,v_0)=\frac{x_3y_1}{x_1+1}(u_0,v_0),\;
y_4(u_0,v_0)=\frac{x_4y_1}{x_1+1}(u_0,v_0).
\end{equation}
Since $(-y_1^2+y_2^2+y_3^2+y_4^2)(u_0,v_0)=1$, we can get $(y_2,y_3,y_4)(u_0,v_0)\neq(0,0,0)$.  It may be assumed that $y_2(u_0,v_0)= ({x_2y_1}/(x_1+1))(u_0,v_0)\neq0$, which means that $y_1(u_0,v_0)\neq0$. 

It follows that \begin{equation}\notag
\begin{aligned}
\langle \bx ,\bnu_1 \rangle(u_0,v_0)&=(-x_1y_1+x_2y_2+x_3y_3+x_4y_4)(u_0,v_0)\\
&=\left(-x_1y_1+\frac{x_2^2y_1}{x_1+1}+\frac{x_3^2y_1}{x_1+1}+\frac{x_4^2y_1}{x_1+1}\right)(u_0,v_0)\\
&=-y_1(u_0,v_0)\neq0.
\end{aligned}
\end{equation}
This contradicts that $\langle\bx,\bnu_1\rangle(u,v)=0$ for all $(u,v)\in U$. Thus $(p_1,q_1,r_1)(u,v)\neq(0,0,0)$. Similarly, it can be shown that $(p_2,q_2,r_2)(u,v)\neq(0,0,0)$ for all $(u,v)\in U$. 
Furthermore, 
we can verify that $ (\overline{\bnu}_1 \cdot\overline{\bnu}_2)(u,v)=0$ for all $(u,v)\in U$. 
Thus  $(\overline{\bx},\overline{\bnu}_1,\overline{\bnu}_2)$  is a generalized framed surface.
\end{proof}
\end{proposition}
\begin{proposition}
Let $(\overline{\bx},\overline{\bnu}_1,\overline{\bnu}_2): U \rightarrow D^3 \times \Delta$ be a generalized framed surface. Denote $\overline{\bx}(u,v)=(x_1,x_2,x_3)(u,v)$, $\overline{\bnu}_1(u,v)=(y_1,y_2,y_3)(u,v)$ and $\overline{\bnu}_2(u,v)=(z_1,z_2,z_3)(u,v)$. Then   $(\bx, \bnu_1, \bnu_2) :U\rightarrow H^3\times \Delta_5$ is a hyperbolic generalized framed surface, where
\begin{equation}\notag
\begin{aligned}
&{\bx}(u,v)=\frac{({1+x_1^2+x_2^2+x_3^2},{2x_1},{2x_2},{2x_3})}{1-x_1^2-x_2^2-x_3^2}(u,v),\\
&\bnu_1(u,v)=\dfrac{(p_1,q_1,r_1,s_1)}{\sqrt{-p_1^2+q_1^2+r_1^2+s_1^2}}(u,v),\\
&\bnu_2(u,v)=\dfrac{(p_2,q_2,r_2,s_2)}{\sqrt{-p_2^2+q_2^2+r_2^2+s_2^2}}(u,v),\\
&p_1(u,v)=2(x_1y_1+x_2y_2+x_3y_3)(u,v),\\
&q_1(u,v)=((1+x_1^2-x_2^2-x_3^2)y_1+2x_1x_2y_2+2x_1x_3y_3)(u,v),\\
&r_1(u,v)=(2x_1x_2y_1+(1-x_1^2+x_2^2-x_3^2)y_2+2x_2x_3y_3)(u,v),\\
&s_1(u,v)=(2x_1x_3y_1+2x_2x_3y_2+(1-x_1^2-x_2^2+x_3^2)y_3)(u,v),\\
&{p}_2(u,v)=2(x_1z_1+x_2z_2+x_3z_3)(u,v),\\
&{q}_2(u,v)=((1+x_1^2-x_2^2-x_3^2)z_1+2x_1x_2z_2+2x_1x_3z_3)(u,v),\\
&{r}_2(u,v)=(2x_1x_2z_1+(1-x_1^2+x_2^2-x_3^2)z_2+2x_2x_3z_3)(u,v),\\
&{s}_2(u,v)=(2x_1x_3z_1+2x_2x_3z_2+(1-x_1^2-x_2^2+x_3^2)z_3)(u,v).
\end{aligned}
\end{equation}
\begin{proof}
Since	$(\overline{\bx},\overline{\bnu}_1,\overline{\bnu}_2): U \rightarrow D^3 \times \Delta$ is a generalized framed surface, there exist smooth functions $\overline{\alpha},\overline{\beta}:U\rightarrow\R$ such that\begin{equation}\notag
\begin{aligned}
(\overline{\bx}_u\times\overline{\bx}_v)(u,v)
&= (x_{2u}x_{3v}-x_{2v}x_{3u},x_{3u}x_{1v}-x_{3v}x_{1u},x_{1u}x_{2v}-x_{1v}x_{2u})(u,v)\\
&= (\overline{\alpha}y_1+\overline{\beta}z_1,\overline{\alpha}y_2+\overline{\beta}z_2,\overline{\alpha}y_3+\overline{\beta}z_3)(u,v).
\end{aligned}
\end{equation}
Then we have
\begin{equation}\notag
\left\{
\begin{aligned}
&(x_{2u}x_{3v}-x_{2v}x_{3u})(u,v)=(\overline{\alpha}y_1+\overline{\beta}z_1)(u,v),\\
&(x_{3u}x_{1v}-x_{3v}x_{1u})(u,v)=(\overline{\alpha}y_2+\overline{\beta}z_2)(u,v),\\
&(x_{1u}x_{2v}-x_{1v}x_{2u})(u,v)=(\overline{\alpha}y_3+\overline{\beta}z_3)(u,v).
\end{aligned}
\right.
\end{equation}
Let $$ {\bx}(u,v)=\bm{\pi}^{-1}\circ\overline{\bx}(u,v)=\frac{({1+x_1^2+x_2^2+x_3^2},{2x_1},{2x_2},{2x_3})}{1-x_1^2-x_2^2-x_3^2}(u,v). $$
By calculating, 
\begin{equation}\notag
\begin{aligned}
&(\bx\wedge\bx_u\wedge\bx_v)(u,v)\\
=&-\frac{4}{(1-x_1^2-x_2^2-x_3^2)^3(u,v)} \Big( 2x_1(\overline{\alpha}y_1+\overline{\beta}z_1)  +2x_2(\overline{\alpha}y_2+\overline{\beta}z_2) 
+2x_3(\overline{\alpha}y_3+\overline{\beta}z_3),\\
&(1+x_1^2-x_2^2-x_3^2)(\overline{\alpha}y_1+\overline{\beta}z_1)+2x_1x_2(\overline{\alpha}y_2+\overline{\beta}z_2)
+2x_1x_3(\overline{\alpha}y_3+\overline{\beta}z_3),\\
&2x_1x_2(\overline{\alpha}y_1+\overline{\beta}z_1)+  (1-x_1^2+x_2^2-x_3^2)(\overline{\alpha}y_2+\overline{\beta}z_2)+ 
2x_2x_3(\overline{\alpha}y_3+\overline{\beta}z_3),\\
&2x_1x_3(\overline{\alpha}y_1+\overline{\beta}z_1)+  2x_2x_3(\overline{\alpha}y_2+\overline{\beta}z_2)+ 
(1-x_1^2-x_2^2+x_3^2)(\overline{\alpha}y_3+\overline{\beta}z_3)\Big)(u,v)\\
=&-\frac{4}{(1-x_1^2-x_2^2-x_3^2)^3(u,v)}  \left(\overline{\alpha}(p_1,q_1,r_1,s_1)+\overline{\beta}(p_2,q_2,r_2,s_2)\right)(u,v)\\
=&-\frac{4\overline{\alpha}\sqrt{-p_1^2+q_1^2+r_1^2+s_1^2}}{(1-x_1^2-x_2^2-x_3^2)^3}(u,v) \dfrac{(p_1,q_1,r_1,s_1)}{\sqrt{-p_1^2+q_1^2+r_1^2+s_1^2}}(u,v)\\
&-\frac{4\overline{\beta}\sqrt{-p_2^2+q_2^2+r_2^2+s_2^2}}{(1-x_1^2-x_2^2-x_3^2)^3}(u,v) \dfrac{(p_2,q_2,r_2,s_2)}{\sqrt{-p_2^2+q_2^2+r_2^2+s_2^2}}(u,v),
\end{aligned}
\end{equation}
where
\begin{equation}\notag
\begin{aligned}
&p_1(u,v)=2(x_1y_1+x_2y_2+x_3y_3)(u,v),\\
&q_1(u,v)=((1+x_1^2-x_2^2-x_3^2)y_1+2x_1x_2y_2+2x_1x_3y_3)(u,v),\\
&r_1(u,v)=(2x_1x_2y_1+(1-x_1^2+x_2^2-x_3^2)y_2+2x_2x_3y_3)(u,v),\\
&s_1(u,v)=(2x_1x_3y_1+2x_2x_3y_2+(1-x_1^2-x_2^2+x_3^2)y_3)(u,v),\\
&p_2(u,v)=2(x_1z_1+x_2z_2+x_3z_3)(u,v),\\
&q_2(u,v)=((1+x_1^2-x_2^2-x_3^2)z_1+2x_1x_2z_2+2x_1x_3z_3)(u,v),\\
&r_2(u,v)=(2x_1x_2z_1+(1-x_1^2+x_2^2-x_3^2)z_2+2x_2x_3z_3)(u,v),\\
&s_2(u,v)=(2x_1x_3z_1+2x_2x_3z_2+(1-x_1^2-x_2^2+x_3^2)z_3)(u,v).
\end{aligned}
\end{equation}
Denote
\begin{equation}\notag
\begin{aligned}
\bnu_1(u,v)=\dfrac{(p_1,q_1,r_1,s_1)}{\sqrt{-p_1^2+q_1^2+r_1^2+s_1^2}}(u,v), \;
\bnu_2(u,v)=\dfrac{(p_2,q_2,r_2,s_2)}{\sqrt{-p_2^2+q_2^2+r_2^2+s_2^2}}(u,v).
\end{aligned}
\end{equation}
It can be verified that $  (-p_1^2+q_1^2+r_1^2+s_1^2)(u,v)>0 $  and  $ (-p_2^2+q_2^2+r_2^2+s_2^2)(u,v)>0. $  Hence, 
$\bnu_1,\bnu_2:U\rightarrow S_1^3$  are well-defined.  Furthermore,
the calculations show that $  \langle \bx ,\bnu_1\rangle(u,v)=\langle \bx ,\bnu_2 \rangle(u,v)=\langle \bnu_1 ,\bnu_2 \rangle(u,v) =0.$ 
Thus $(\bx, \bnu_1, \bnu_2) $ is a hyperbolic generalized framed surface.
\end{proof}
\end{proposition} 
\subsection{Relations between hyperbolic generalized framed base surfaces and  lightcone framed base surfaces}
In order to investigate the relations between hyperbolic generalized framed base  surfaces  in $H^3$ and  lightcone framed base surfaces in $\R_1^3$, we consider the projections $\bm{\pi}_i:H^3\rightarrow\R_1^3$ ($i=2,3,4$),  where $\bm{\pi}_2 (x_1,x_2,x_3,x_4)=(x_1,x_3,x_4)$, $\bm{\pi}_3(x_1,x_2,x_3,x_4)=(x_1,x_2,x_4)$ and $\bm{\pi}_4(x_1,x_2,x_3,x_4)=(x_1,x_2,x_3)$.  
\begin{proposition}
Let $(\bx, \bnu_1, \bnu_2) :U\rightarrow H^3 \times \Delta_5$ be a hyperbolic generalized framed surface. 
If  $\bm{\pi}_i\circ\bnu_3(u,v)$ is spacelike for all $ (u,v)\in U $, then $\widetilde{\bx}=\bm{\pi}_i\circ\bx:U\rightarrow \R_1^3$  is a lightcone framed base surface,  $i=2,3,4$.
\begin{proof}
	We discuss the case when $i=4$, the proof for other cases follows similarly.
	Denote $\bx(u,v)=(x_1,x_2,x_3,x_4)(u,v)$, $\bnu_1(u,v)=(y_1,y_2,y_3,y_4)(u,v)$ and  $\bnu_2(u,v)=(z_1,z_2,z_3,z_4)(u,v)$.
Since $(\bx, \bnu_1, \bnu_2)  $ is a hyperbolic generalized framed surface,   there exist smooth functions $\alpha,\beta:U\rightarrow\R$ such that equation (\ref{eq9}) holds.
Let
$$\widetilde{\bx}(u,v)=\bm{\pi}_4\circ\bx(u,v) =(x_1,x_2,x_3)(u,v).$$
By calculating, 
\begin{equation}\notag
\begin{aligned}
&(\widetilde{\bx}_u\wedge\widetilde{\bx}_v)(u,v)\\
=&~\alpha(u,v)\left(y_4(x_1,x_2,x_3)-x_4(y_1,y_2,y_3)\right)(u,v)+\beta(u,v)\left(z_4(x_1,x_2,x_3)-x_4(z_1,z_2,z_3)\right)(u,v).
\end{aligned}
\end{equation}
Denote $\bnu_3(u,v)=(\bx\wedge\bnu_1\wedge\bnu_2)(u,v)=(w_1,w_2,w_3,w_4)(u,v)$ and $\bm{\pi}_4\circ\bnu_3(u,v)=(w_1,w_2,w_3)(u,v)$. Then we can get $\langle\widetilde{\bx}_u\wedge\widetilde{\bx}_v,\bm{\pi}_4\circ\bnu_3\rangle(u,v)=0$ for all $ (u,v)\in U $. Since $\bm{\pi}_4\circ\bnu_3$ is spacelike, we can take $$\bt(u,v)=\frac{\bm{\pi}_4\circ\bnu_3}{|\bm{\pi}_4\circ\bnu_3|}(u,v).$$ Using Theorem 3.4 in \cite{LPT2025}, $\langle\widetilde{\bx}_u\wedge\widetilde{\bx}_v,\bt\rangle(u,v)=0$ for all $ (u,v)\in U $  implies that $\widetilde{\bx}$ is a lightcone framed base surface.
\end{proof}
\end{proposition}
From $\R_1^3$ to $H^3$, we consider the maps $\bm{p}_i:\R_1^3\rightarrow H^3$ ($i=2,3,4$), where  $$ \bm{p}_2 (a,b,c)=\left(a,\sqrt{-1+a^2-b^2-c^2},b,c\right),$$  $$ \bm{p}_3 (a,b,c)=\left(a,b,\sqrt{-1+a^2-b^2-c^2},c\right),$$ 
$$ \bm{p}_4 (a,b,c)=\left(a,b,c,\sqrt{-1+a^2-b^2-c^2}\right)$$ and $a^2-b^2-c^2>1$.
\begin{proposition}
Let $(\widetilde{\bx}, \bm{\ell}^+, \bm{\ell}^-) :U\rightarrow \R_1^3 \times \Delta_4$ be a lightcone framed surface.  Denote  $\widetilde{\bx}(u,v)=(x_1,x_2,x_3)(u,v)$
If $(x_1^2-x_2^2-x_3^2)(u,v)>1$ for all $ (u,v)\in U $, then $ {\bx}=\bm{p}_i\circ \widetilde{\bx}:U\rightarrow H^3$  is a hyperbolic generalized framed base surface,  $i=2,3,4$.
\begin{proof}
Without loss of generality, we assume 
$i=4$, the other cases can be proved similarly.
 Denote   $\bm{\ell}^+(u,v)=(y_1,y_2,y_3)(u,v)$ and $\bm{\ell}^-(u,v)=(z_1,z_2,z_3)(u,v)$.
Since $(\widetilde{\bx}, \bm{\ell}^+, \bm{\ell}^-)$ is a   lightcone framed surface,      there exist smooth functions $\widetilde{\alpha},\widetilde{\beta}:U\rightarrow\R$ such that  
\begin{equation}\notag
\begin{aligned}
(\widetilde{\bx}_u\wedge\widetilde{\bx}_v)(u,v)&=
(-x_{2u}x_{3v}+x_{2v}x_{3u},x_{3u}x_{1v}-x_{3v}x_{1u},x_{1u}x_{2v}-x_{1v}x_{2u})(u,v)\\
&=(\widetilde{\alpha}y_1+\widetilde{\beta}z_1,\widetilde{\alpha}y_2+\widetilde{\beta}z_2,\widetilde{\alpha}y_3+\widetilde{\beta}z_3)(u,v).
\end{aligned}
\end{equation}

Let
$${\bx}(u,v)=\bm{p}_4\circ\widetilde{\bx}(u,v) =\left(x_1,x_2,x_3,\sqrt{-1+x_1^2-x_2^2-x_3^2}\right)(u,v).$$
Denote $x_4=\sqrt{-1+x_1^2-x_2^2-x_3^2}$. By calculating, 
\begin{equation}\notag
\begin{aligned}
	&(\bx\wedge {\bx}_u\wedge{\bx}_v)(u,v)\\
	=&~\widetilde{\alpha}(u,v)\left(-\frac{1}{x_4}(y_1,y_2,y_3,0)+\frac{x_1y_1-x_2y_2-x_3y_3}{x_4}(x_1,x_2,x_3,x_4)\right)(u,v)\\&+\widetilde{\beta}(u,v)\left(-\frac{1}{x_4}(z_1,z_2,z_3,0)+\frac{x_1z_1-x_2z_2-x_3z_3}{x_4}(x_1,x_2,x_3,x_4)\right)(u,v).
\end{aligned}
\end{equation}
Take $$\bm{\bnu}(u,v)=\frac{\left(-x_4(y_2z_3-y_3z_2),-x_4(y_1z_3-y_3z_1),x_4(y_1z_2-y_2z_1),-\langle\widetilde{\bx},\bm{\ell}^+\wedge\bm{\ell}^-\rangle\right)}{\sqrt{4x_4^2+\left(\langle\widetilde{\bx},\bm{\ell}^+\wedge\bm{\ell}^-\rangle\right)^2}}(u,v).$$ It can be verified that $\langle\bnu,\bnu\rangle(u,v)=1$ and $\langle\bnu,\bx\rangle(u,v)=\langle\bnu,\bx\wedge\bx_u\wedge\bx_v\rangle(u,v)=0$ for all $(u,v)\in U$.  Using Theorem \ref{th7}, we can conclude that $ {\bx}$ is a hyperbolic generalized framed base surface.
\end{proof}
\end{proposition}
\section{Horocyclic surfaces}\label{S5}
Let $\ba_0:I\rightarrow H^3$ and $ \ba_1,
\ba_2:I\rightarrow S_1^3$  be smooth mappings   with  $\langle \ba_i(u),\ba_j(u)\rangle=0$ ($i\neq j$, $i,j\in\{0,1,2\} $) for all $u\in I$. Denote $\ba_3(u)=(\ba_0 \wedge\ba_1 \wedge\ba_2)(u) $, then $\left\{\ba_0,\ba_1,\ba_2,\ba_3\right\}$ is a pseudo-orthogonal frame of $\R_1^4$ and we have the followings.
\begin{equation}\notag
\begin{pmatrix}
\ba'_0(u)\\
\ba'_1(u)\\
\ba'_2(u)\\
\ba'_3(u)	 
\end{pmatrix}=
\begin{pmatrix}
0&h_1(u)&h_2(u)&h_3(u)\\
h_1(u)&0&h_4(u)&h_5(u)\\
h_2(u)&-h_4(u)&0&h_6(u)\\
h_3(u)&-h_5(u)&-h_6(u)&0	 
\end{pmatrix}
\begin{pmatrix}
\ba_0(u)\\
\ba_1(u)\\
\ba_2(u)\\
\ba_3(u)	 
\end{pmatrix},
\end{equation}
where $h_i:I\rightarrow\R$ are smooth functions, $i=1,2,\dots,6$.
\begin{definition}[\cite{IST2010}] \rm Let  $\bx$ be a smooth surface in $H^3$, $\bx$ is a {\it horocyclic surface} if it is locally parameterized by one-parameter families of horocycles around any point, that is, $$ \bx(u,v)=\ba_0(u)+v\ba_1(u)+\frac{v^2}{2}(\ba_0(u)+\ba_2(u)),$$  where $(u,v)\in I\times\R$.	 
\end{definition}
We can prove that horocyclic surfaces are hyperbolic generalized framed base surfaces.
\begin{proposition}\label{th4.2}
Under the above notations,	let $\bx:I\times\R\rightarrow H^3$ be a horocyclic surface given by $$\bx(u,v)=\ba_0(u)+v\ba_1(u)+\frac{v^2}{2}(\ba_0(u)+\ba_2(u)).$$
Then $(\bx, \bnu_1, \bnu_2) $ is a hyperbolic generalized framed surface, where
\begin{equation}\notag
\bnu_1(u,v)=\ba_3(u),\;
\bnu_2(u,v)=-\frac{v^2}{2}\ba_0(u)-v\ba_1(u)+\left(1-\frac{v^2}{2}\right)\ba_2(u) 		 
\end{equation}
with \begin{equation}\notag
\alpha(u,v)=vh_1(u)-h_2(u)-vh_4(u),\;
\beta(u,v)=\left(1+\frac{v^2}{2}\right)h_3(u)+vh_5(u)+\frac{v^2}{2}h_6(u)
\end{equation}
and the basic invariants
\begin{equation}\notag
\begin{aligned}
&a_1(u,v)=\left(1+\frac{v^2}{2}\right)h_3(u)+vh_5(u)+\frac{v^2}{2}h_6(u), &&a_2(u,v)=0,\\
&b_1(u,v)=-vh_1(u)+h_2(u)+vh_4(u), &&b_2(u,v)=0,\\
&c_1(u,v)=\left(\frac{v^2}{2}-1\right)h_1(u)-vh_2(u)-\frac{v^2}{2}h_4(u), &&c_2(u,v)=-1,\\
&e_1(u,v)=\frac{v^2}{2}h_3(u)+vh_5(u)+\left(\frac{v^2}{2}-1\right)h_6(u),  &&e_2(u,v)=0,\\
&f_1(u,v)=vh_3(u)+h_5(u)+vh_6(u), &&f_2(u,v)=0,\\
&g_1(u,v)=-\frac{v^2}{2}h_1(u)+vh_2(u)+\left(1+\frac{v^2}{2}\right)h_4(u), &&g_2(u,v)=1.	 
\end{aligned} 
\end{equation}
\end{proposition}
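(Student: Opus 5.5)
The plan is to verify directly that $\{\bx,\bnu_1,\bnu_2\}$ is an admissible frame, identify $\bnu_3$ explicitly, and then read off every basic invariant as a pseudo inner product. The key observation is that $\bnu_3=\bx\wedge\bnu_1\wedge\bnu_2$ should turn out to be $-\bx_v$. This explains $c_2=-1$, and once it is established, the condition on $\bx\wedge\bx_u\wedge\bx_v$ follows from the formulas $\alpha=\det(\bb~\bc)$ and $\beta=\det(\bc~\ba)$ given after (\ref{eq1}) and (\ref{eq2}).

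\textbf{Step 1 (the frame).} Write $\bx=(1+\frac{v^2}{2})\ba_0+v\ba_1+\frac{v^2}{2}\ba_2$, and use $\langle\ba_0,\ba_0\rangle=-1$, $\langle\ba_i,\ba_i\rangle=1$ for $i=1,2,3$, together with pairwise orthogonality.
\begin{itemize}
\item Expanding gives $\langle\bx,\bx\rangle=-1$, $\langle\bnu_2,\bnu_2\rangle=-\frac{v^4}{4}+v^2+(1-\frac{v^2}{2})^2=1$ and $\langle\bx,\bnu_2\rangle=(1+\frac{v^2}{2})\frac{v^2}{2}-v^2+\frac{v^2}{2}(1-\frac{v^2}{2})=0$.
\item Since $\bnu_1=\ba_3$ is orthogonal to $\ba_0,\ba_1,\ba_2$, we also get $\langle\bx,\bnu_1\rangle=\langle\bnu_1,\bnu_2\rangle=0$, so $(\bnu_1,\bnu_2)\in\Delta_5$.
\item Let $\bm{w}=\bx_v=\ba_1+v(\ba_0+\ba_2)$. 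The same expansion shows $\langle\bm{w},\bm{w}\rangle=1$ and that $\bm{w}$ is orthogonal to $\bx,\bnu_1,\bnu_2$. Hence $\bnu_3=\pm\bm{w}$.
\item To fix the sign, expand $\bx\wedge\ba_3\wedge\bnu_2$ multilinearly, using only the basic products $\ba_i\wedge\ba_j\wedge\ba_k=\pm\ba_l$ that follow from $\ba_3=\ba_0\wedge\ba_1\wedge\ba_2$. Equivalently, compare one coefficient, e.g.\ at $v=0$, where $\bx\wedge\bnu_1\wedge\bnu_2=\ba_0\wedge\ba_3\wedge\ba_2$. The claimed invariants force this to equal $-\ba_1$, and I expect that sign check to confirm $\bnu_3=-\bx_v$.
\end{itemize}

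\textbf{Step 2 (invariants of $\bx$).} Because $\bnu_1,\bnu_2,\bnu_3$ are spacelike, the coefficients are $a_i=\langle\bx_{(i)},\bnu_1\rangle$, $b_i=\langle\bx_{(i)},\bnu_2\rangle$ and $c_i=\langle\bx_{(i)},\bnu_3\rangle$.
\begin{itemize}
\item From $\bx_v=-\bnu_3$ we get $a_2=b_2=0$ and $c_2=-1$, so the compatibility condition $a_1b_2-a_2b_1=0$ holds automatically.
\item For $\bx_u$, use the Frenet-type system for $\ba_j$ to write $\bx_u=(1+\frac{v^2}{2})\ba_0'+v\ba_1'+\frac{v^2}{2}\ba_2'$ in the basis $\ba_0,\dots,\ba_3$. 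Pairing with $\bnu_1,\bnu_2,\bnu_3$ then gives $a_1,b_1,c_1$ as stated.
\item Finally, $\alpha=b_1c_2-b_2c_1=-b_1$ and $\beta=c_1a_2-c_2a_1=a_1$. These are exactly the stated $\alpha$ and $\beta$, so $\bx\wedge\bx_u\wedge\bx_v=\alpha\bnu_1+\beta\bnu_2$ and $(\bx,\bnu_1,\bnu_2)$ is a hyperbolic generalized framed surface.
\end{itemize}

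\textbf{Step 3 (invariants of the frame).} Compute $e_i=\langle\bnu_{1(i)},\bnu_2\rangle$, $f_i=\langle\bnu_{1(i)},\bnu_3\rangle$ and $g_i=\langle\bnu_{2(i)},\bnu_3\rangle$.
\begin{itemize}
\item In the $v$-direction, $\bnu_{1v}=0$ gives $e_2=f_2=0$. Also $\bnu_{2v}=-v\ba_0-\ba_1-v\ba_2=\bnu_3$, so $g_2=1$.
\item In the $u$-direction, $\bnu_{1u}=\ba_3'=h_3\ba_0-h_5\ba_1-h_6\ba_2$. For $\bnu_{2u}$, differentiate $\bnu_2$ in $u$ using the structure equations, then take inner products with $\bnu_2$ and $\bnu_3=-\ba_1-v(\ba_0+\ba_2)$.
\end{itemize}

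I expect no conceptual obstacle here. The main risk is bookkeeping: getting the sign of $\bnu_3$ right in Step 1 via the pseudo vector product, and tracking the $-1$ in $\langle\ba_0,\ba_0\rangle$ when pairing in Steps 2 and 3. So I would settle the sign of $\bnu_3$ first, by the $v=0$ specialization, before doing any of the invariant computations.
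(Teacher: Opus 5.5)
Your proposal is correct and is essentially the direct verification the paper intends; the paper states this proposition without writing out a proof, and all the values you would compute (including $\bnu_{2v}=\bnu_3$, $\alpha=-b_1$, $\beta=a_1$) agree with the statement. The sign you left as an expectation does check out: $\langle\ba_1,\ba_0\wedge\ba_3\wedge\ba_2\rangle=\det(\ba_1,\ba_0,\ba_3,\ba_2)=\det(\ba_0,\ba_1,\ba_2,\ba_3)=-\langle\ba_3,\ba_3\rangle=-1$, so $\ba_0\wedge\ba_3\wedge\ba_2=-\ba_1$. Since $\langle\bnu_3,\bx_v\rangle=\pm1$ is continuous on the connected domain $I\times\R$, this gives $\bnu_3=-\bx_v$ everywhere.
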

\begin{corollary}
Since $a_2(u,v)=b_2(u,v)=0$, by Proposition \ref{prop6} (2)(ii), we know that a horocyclic surface is a one-parameter family of hyperbolic framed base curves with respect to $v$.
\end{corollary}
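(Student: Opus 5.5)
The plan is to combine Proposition \ref{th4.2} with Proposition \ref{prop6} (2)(ii). Proposition \ref{th4.2} already provides a hyperbolic generalized framed surface $(\bx,\bnu_1,\bnu_2)$ over the horocyclic surface, with $\bnu_1=\ba_3$ and $\bnu_2=-\frac{v^2}{2}\ba_0-v\ba_1+(1-\frac{v^2}{2})\ba_2$. Its basic invariants satisfy $a_2(u,v)=b_2(u,v)=0$ identically on $I\times\R$. This is exactly the hypothesis of Proposition \ref{prop6} (2)(ii), which then says that $(\bx,\bnu_1,\bnu_2)$ and $(\bx,\bnu_2,\bnu_1)$ are one-parameter families of hyperbolic framed curves with respect to $v$. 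Hence $\bx$ is a one-parameter family of hyperbolic framed base curves with respect to $v$.

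To make the argument self-contained, I would also check the defining conditions directly. First, $\bx_v=\ba_1+v(\ba_0+\ba_2)$. Using the pseudo-orthogonality of $\{\ba_0,\ba_1,\ba_2,\ba_3\}$, together with $\langle\ba_0,\ba_0\rangle=-1$ and $\langle\ba_1,\ba_1\rangle=\langle\ba_2,\ba_2\rangle=1$, one gets:
\begin{itemize}
\item $\langle\bx_v,\ba_3\rangle=0$ immediately;
\item $\langle\bx_v,\bnu_2\rangle=-v+\frac{v^3}{2}+v\bigl(1-\frac{v^2}{2}\bigr)=0$;
\item $\langle\bx,\bnu_1\rangle=\langle\bx,\bnu_2\rangle=0$, by an equally short expansion.
\end{itemize}
These computations confirm $a_2=b_2=0$, since $a_2=\langle\bx_v,\bnu_1\rangle$ and $b_2=\langle\bx_v,\bnu_2\rangle$ by \eqref{eq2}.

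The only step that needs real care is confirming that $(\bnu_1,\bnu_2)$ actually lies in $\Delta_5$, i.e.\ that the frame is admissible. For this I would compute
\[
\langle\bnu_2,\bnu_2\rangle=-\tfrac{v^4}{4}+v^2+\bigl(1-\tfrac{v^2}{2}\bigr)^2=1,
\]
and note that $\langle\bnu_1,\bnu_2\rangle=0$ holds because $\ba_3$ is pseudo-orthogonal to $\ba_0,\ba_1,\ba_2$. Once this is in place, the corollary follows directly from Proposition \ref{prop6}.
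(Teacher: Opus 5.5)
Your proposal is correct and follows the paper's route: take the frame from Proposition \ref{th4.2}, observe that $a_2=b_2=0$, and apply Proposition \ref{prop6} (2)(ii). Your direct checks ($\langle\bx_v,\bnu_2\rangle=0$, $\langle\bnu_2,\bnu_2\rangle=1$, $\langle\bx,\bnu_i\rangle=0$, and $a_2=\langle\bx_v,\bnu_1\rangle$, $b_2=\langle\bx_v,\bnu_2\rangle$ via \eqref{eq2}) are all correct and only make explicit what the paper takes from Proposition \ref{th4.2}.
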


The horocycle surfaces are the analogies of ruled surfaces. The authors in \cite{IST2010} investigated the flatness of horocycle surfaces and conducted a further classification of the surfaces. Using the basic invariants of hyperbolic generalized framed surfaces, we derive the following classification results.
\begin{proposition}\label{prop4.5}
Let $(\bx,\bnu_1,\bnu_2):I\times\R\rightarrow H^3\times\Delta_5$ be a hyperbolic generalized framed surface  with   basic invariants $a_i,b_i,c_i,e_i,f_i,g_i$ $(i=1,2) $ as in Theorem \ref{th4.2}. Then we have the followings.
\begin{itemize}
\item [\rm(1)] $\bx$ is a horo-flat surface if and only if $c_1(u,v)+g_1(u,v)=b_1(u,v)=0$ for all $(u,v)\in U$.
\item [\rm(2)] $\bx$ is a generalized horo-cone if and only if $c_1(u,v)=g_1(u,v)=b_1(u,v)= (v^2+2)a_1(u,v)-v^2e_1(u,v)-2vf_1(u,v)=0$ for all $(u,v)\in U$.
\item [\rm(3)]  $\bx$ is a horo-cone with a single vertex if and only if $c_1(u,v)=g_1(u,v)=b_1(u,v)= (v^2+2)a_1(u,v)-v^2e_1(u,v)-2vf_1(u,v)= f_1(u,v)-v(a_1(u,v)-e_1(u,v))=0$ and $a_1(u,v)-e_1(u,v)\neq0$ for all $(u,v)\in U$.
\item [\rm(4)]   $\bx$ is a horo-cone with two vertices if and only if $c_1(u,v)=g_1(u,v)=b_1(u,v)=(v^2+2)a_1(u,v)-v^2e_1(u,v)-2vf_1(u,v)=0$ and $f_1(u,v)-v(a_1(u,v)-e_1(u,v))=\lambda(a_1(u,v)-e_1(u,v))\neq0$  for all $(u,v)\in U$, where $\lambda\in\R$.
\item [\rm(5)]  $\bx$ is a  conical horosphere if and only if $c_1(u,v)=g_1(u,v)=b_1(u,v)=a_1(u,v)-e_1(u,v)= e_1(u,v)-vf_1(u,v)=0$ and $f_1(u,v)\neq0$  for all $(u,v)\in U$.
\end{itemize}
\begin{proof}
From the results of horo-flat surfaces in \cite{I2009,IST2010}, we obtain $\bx$ is a horo-flat surface if and only if $h_2(u)=h_4(u)-h_1(u)=0$.  $\bx$ is a generalized horo-cone if  $h_1(u)=h_2(u)=h_3(u)=h_4(u)=0$.   A generalized horo-cone with $h_5(u)=0$ and $h_6(u)\neq0$  is called a horo-cone with a single vertex. A horo-cone with two vertices is the generalized horo-cone that satisfies $h_5(u)\neq0$ and $h_5(u)=\lambda h_6(u)$, where $ \lambda\in\R $. If $h_1(u)=h_2(u)=h_3(u)=h_4(u)=h_6(u)=0$ and $h_5(u)\neq0$, $\bx$ is call conical horosphere.

By calculating, we have $c_1(u,v)+g_1(u,v)=h_4(u)-h_1(u)$, $ b_1(u,v)-v(c_1(u,v)+g_1(u,v))=h_2(u) $,  $a_1(u,v)-e_1(u,v)=h_3(u)+h_6(u)$, $f_1(u,v)-v(a_1(u,v)-e_1(u,v))=h_5(u)$ and
$(v^2+2)a_1(u,v)-v^2e_1(u,v)-2vf_1(u,v)=2h_3(u)$. Based on the above definitions and discussions, we can derive conclusions $ (1) $-$ (5) $.
\end{proof}
\end{proposition}

Furthermore, we can consider the singularities of horocycle surfaces. The  cross cap singularities of horocyclic surfaces  have been investigated in \cite{IST2010} . As a corollary of Proposition  \ref{prop2.18}, we obtain the following.
\begin{corollary}\label{cor4.6}
Let $(\bx,\bnu_1,\bnu_2):U\rightarrow H^3\times\Delta_5$ be a hyperbolic generalized framed surface  with  basic invariants $a_i,b_i,c_i,e_i,f_i,g_i$  $(i=1,2)$ defined as Theorem \ref{th4.2}. Suppose that $\bm{{p}}=(u_0,v_0)$ is a singular point of $\bx$, that is, $a_1(u_0,v_0)=b_1(u_0,v_0)=0.$  Then
\begin{itemize}
	\item [\rm(1)] 
	$\bm{{p}}$ is a cross cap singular point of $\bx$ if and only if $( a_{1u}b_{1v}-a_{1v}b_{1u} )(u_0,v_0)\neq0$.
	
	\item[\rm(2)] $\bm{p}$  is a $S_1^+$ singular point of $\bx$   if and only if  $( a_{1u}b_{1v}-a_{1v}b_{1u} )(u_0,v_0)=0$,   $(c_1a_{1v}+a_{1u},c_1b_{1v}+b_{1u})(u_0,v_0)\neq(0,0)$ and $\det {\rm Hess} \;\varphi (u_0,v_0)<0$.
	\item[\rm(3)] $\bm{p}$  is a $S_1^-$ singular point of $\bx$   if and only if $( a_{1u}b_{1v}-a_{1v}b_{1u} )(u_0,v_0)=0$  and $\det {\rm Hess} \;\varphi (u_0,v_0)>0$.
\end{itemize}
 
\end{corollary}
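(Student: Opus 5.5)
The plan is to read Corollary \ref{cor4.6} off Proposition \ref{prop2.18} by substituting the basic invariants of the horocyclic surface from Proposition \ref{th4.2}, namely $a_2=b_2=0$ and $c_2=-1$. The arguments $(u,v)$ are suppressed below, and all quantities are evaluated at $\bm{p}=(u_0,v_0)$.

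First I identify the singular set. With $\ba=(a_1,0)^T$, $\bb=(b_1,0)^T$, $\bc=(c_1,-1)^T$, the formulas $\alpha=\det(\bb~\bc)$ and $\beta=\det(\bc~\ba)$ give $\alpha=-b_1$ and $\beta=a_1$. These agree with the expressions for $\alpha,\beta$ in Proposition \ref{th4.2}. Hence $\bm{p}$ is singular exactly when $a_1(u_0,v_0)=b_1(u_0,v_0)=0$, which is the hypothesis of the corollary.

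Next I check that two of the hypotheses shared by all three items of Proposition \ref{prop2.18} hold automatically. Since $a_2=b_2\equiv 0$, the condition $(a_1,a_2,b_1,b_2)(u_0,v_0)=(0,0,0,0)$ reduces to the singularity condition. Since $c_2\equiv-1$, we always have $(c_1,c_2)\neq(0,0)$. So in each of (1)--(3) only the remaining conditions need to be translated.

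For those, I compute the four determinants. Because $a_2,b_2$ vanish identically, their derivatives vanish, so
\[
\det(\ba_u~\bc)=-a_{1u},\quad \det(\ba_v~\bc)=-a_{1v},\quad \det(\bb_u~\bc)=-b_{1u},\quad \det(\bb_v~\bc)=-b_{1v}.
\]
The cross cap quantity $\det(\bb_u~\bc)\det(\ba_v~\bc)-\det(\bb_v~\bc)\det(\ba_u~\bc)$ then equals $a_{1v}b_{1u}-a_{1u}b_{1v}=-(a_{1u}b_{1v}-a_{1v}b_{1u})$. Its vanishing or non-vanishing is therefore equivalent to that of $a_{1u}b_{1v}-a_{1v}b_{1u}$, which gives item (1) and the first condition in items (2) and (3).

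For the linear independence condition in (2), I substitute into the two components of the vector in Proposition \ref{prop2.18}(2). The first is $-c_1\det(\ba_v~\bc)+c_2\det(\ba_u~\bc)=c_1a_{1v}+a_{1u}$. The second is $c_2\det(\bb_u~\bc)-c_1\det(\bb_v~\bc)=b_{1u}+c_1b_{1v}$. This is exactly the pair $(c_1a_{1v}+a_{1u},\,c_1b_{1v}+b_{1u})$ in the statement. The Hessian sign conditions on $\varphi$ carry over verbatim, with $\varphi$ built from $\xi=c_1\partial/\partial u-\partial/\partial v$ and $\eta=-\partial/\partial u-c_1\partial/\partial v$ as in the proof of Proposition \ref{prop2.18}.

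There is no real obstacle here. The only care needed is to keep the sign conventions of $\det(\cdot~\cdot)$ consistent, so that the rewritten expressions match those in the statement.
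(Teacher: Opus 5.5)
Your proposal is correct and follows the paper's route exactly: the paper obtains this corollary by specializing Proposition \ref{prop2.18} to the basic invariants of Proposition \ref{th4.2}. Your substitutions $a_2=b_2=0$, $c_2=-1$ all check out, giving $\alpha=-b_1$, $\beta=a_1$, the cross cap quantity $-(a_{1u}b_{1v}-a_{1v}b_{1u})$, and the vector $(c_1a_{1v}+a_{1u},\,c_1b_{1v}+b_{1u})$.
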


\vspace{0.3cm}

\noindent Donghe Pei\\
School of Mathematics and Statistics \\
Northeast Normal University \\
Changchun 130024 \\
P. R. China\\
{peidh340@nenu.edu.cn} 

\vspace{0.3cm}
\noindent Masatomo Takahashi\\
Muroran Institute of Technology\\
Muroran 0508585\\
Japan\\
{masatomo@muroran-it.ac.jp}

\vspace{0.3cm}
\noindent Anjie Zhou\\
School of Mathematics and Statistics \\
Northeast Normal University \\
Changchun 130024 \\
P. R. China\\
{zhouaj882@nenu.edu.cn}

\end{document}